\newcommand{\R}{\mathbb{R}}
\renewcommand{\vec}[1]{\boldsymbol{#1}}		% for bold vectors
\DeclareMathOperator{\vspan}{span}         % vector span
\DeclareMathAlphabet{\mathbf}{OT1}{cmr}{bx}{it}
\newcommand{\vb}{{\mathbf b}}
\newcommand{\ve}{{\mathbf e}}
\newcommand{\vh}{{\mathbf h}}
\newcommand{\vr}{{\mathbf r}}
\newcommand{\vv}{{\mathbf v}}
\newcommand{\vw}{{\mathbf w}}
\newcommand{\vx}{{\mathbf x}}
\newcommand{\vy}{{\mathbf y}}
\DeclareMathOperator{\cond}{cond}
\DeclarePairedDelimiter{\norm}{\lVert}{\rVert}      % norm
\newcommand{\revone}[1]{#1}		% revision 1
\newcommand{\revtwo}[1]{#1}		% revision 2
\newcommand{\revthree}[1]{#1}		% revision 3
\title{A sketch-and-select Arnoldi~process}
\author{Stefan G\"uttel\thanks{Department of Mathematics, The University of Manchester, M13\,9PL Manchester, United Kingdom, \texttt{stefan.guettel@manchester.ac.uk}. S.\,G. acknowledges a fellowship from The Alan Turing Institute under the EPSRC grant EP/W001381/1 and a Royal Society Industry Fellowship IF/R1/231032.} \and Igor Simunec\thanks{Scuola Normale Superiore, 56126 Pisa, Italy,  \texttt{igor.simunec@sns.it}. \revtwo{The work of I.\,S. was funded in part by the Italian Ministry of University and Research under the PRIN2022 project ``Low-rank Structures and Numerical Methods in Matrix and Tensor Computations and their Application'', proposal code 20227PCCKZ -- CUP E53D23005520006.}}}
\date{}
\begin{document}

\maketitle

\begin{abstract}
    A sketch-and-select Arnoldi process to generate a  well-conditioned basis of a Krylov space at low cost is proposed. At each iteration the procedure utilizes randomized sketching to select a limited number of previously computed basis vectors to project out of the current basis vector. The computational cost grows linearly with the dimension of the Krylov space. The subset selection problem for the projection step is approximately solved with a number of heuristic algorithms and greedy methods used in statistical learning and compressive sensing.  
\end{abstract}

% REQUIRED
\begin{keywords}
Krylov method, Arnoldi process, randomized sketching
\end{keywords}

% REQUIRED
\begin{MSCcodes}
65F10, 65F50
\end{MSCcodes}

\section{Introduction}

The Arnoldi process \cite{Arnoldi1951} is a key component of many Krylov subspace methods for large-scale numerical linear algebra computations, including solving linear systems of equations and eigenvalue problems with  nonsymmetric matrices~$A\in\mathbb{R}^{N\times N}$; see, e.g., \cite{Saad2003, saad2011numerical}. The Arnoldi process is also used for solving least squares problems, approximating matrix functions or matrix equations, and in model order reduction, to name just a few other applications. Given a starting vector $\vb\in\mathbb{R}^N$ and an integer $m\ll N$, the Arnoldi process iteratively constructs an orthonormal basis $\{\vv_1,\vv_2,\ldots,\vv_m\}$ of the Krylov space $\mathcal{K}_m(A,\vb) := \vspan \{ \vb, A\vb, \ldots, A^{m-1}\vb\}$. More precisely, given $j$~orthonormal basis vectors $\vv_1 := \vb/\|\vb\|,\vv_2,\ldots,\vv_j$, the next basis vector is obtained by orthogonalizing $\vw_j := A\vv_j$ against all previous vectors,
\begin{equation}\label{eq:arn}
\widehat \vw_j := \vw_j - \sum_{i=1}^j h_{i,j} \vv_i, \quad h_{i,j} := \vv_i^T \vw_j,
\end{equation}
and then setting $\vv_{j+1}:=\widehat \vw_j/h_{j+1,j}$ with $h_{j+1,j}= \|\widehat \vw_j\|$. Collecting the basis vectors into $V_m = [\vv_1,\vv_2,\ldots,\vv_m]\in\mathbb{R}^{N\times m}$ and the orthogonalization coefficients into $H_m = [h_{i,j}]\in\mathbb{R}^{m\times m}$, the Arnoldi process generates an Arnoldi decomposition 
\[
    A V_m = V_m H_m + h_{m+1,m} \vv_{m+1} \ve_m^T,
\]
where $\ve_m\in\mathbb{R}^m$ denotes the $m$-th canonical unit vector. By construction, $H_m$ is an upper-Hessenberg matrix. 

In terms of arithmetic cost, the Arnoldi process requires $m$ matrix-vector products~$A \vw_j$, as well as $m(m+1)/2$ inner products and vector operations (``\texttt{axpy}'' in BLAS-1 naming), for a total of 
\[
O(m\cdot \texttt{nnz}(A) + N m^2)
\]
arithmetic operations. For sufficiently sparse $A$ and as $m$ increases, this cost will be dominated by the $N m^2$-term for the orthogonalization. There are at least two possible ways to reduce this cost. The first one is to restart the Arnoldi process after $m$ iterations, using $\vv=\vv_{m+1}$ as the starting vector for the next cycle. Restarting  is particularly natural in the context of solving linear systems of equations (as there exists a linear error equation $A\ve=\vr$ where $\vr$ is the residual), but it can also be used for eigenvalue problems \cite{Morgan1996,Stewart01,StathopoulosEtAl1998} or matrix function computations \cite{AfanasjewEtAl2008a,FrommerGuettelSchweitzer2014a}.  Of course, the combined Krylov basis computed after $\ell>1$ restarts is no longer orthonormal and this usually leads to a delayed convergence in restarted Krylov methods. \revone{However, there exist deflation approaches and  recycling methods which aim to mitigate the convergence delay; see \cite{soodhalter2020survey} for a survey.} \revthree{We mention that restarted Krylov methods fall within the more general category of Krylov methods with non-orthogonal bases, which have been studied in \cite{SimonciniSzyld2005}.}

The second, more recently proposed approach to reduce the arithmetic cost of the Arnoldi process is to employ randomized sketching;  see, e.g., \cite{balabanov2019randomized,balabanov2021randomized,balabanov2021randomizedblock,nakatsukasa2021fast,balabanov2022randomized}. 
The key tool of sketching is an embedding matrix $S\in\mathbb{R}^{s\times N}$ with $m < s\ll N$ that distorts the Euclidean norm $\|\,\cdot\,\|$ of vectors in a controlled manner~\cite{sarlos2006improved,woodruff2014sketching}. More precisely, given a positive integer $m$ and some $\varepsilon\in [0,1)$, we assume that $S$ is such that for all vectors~$\vv$ in the Krylov space $\mathcal{K}_{m+1}(A,\vb)$,
\begin{equation}
(1-\varepsilon) \| \vv \|^2 \leq \| S \vv\|^2 \leq (1+\varepsilon) \|\vv\|^2.
\label{eq:sketch}
\end{equation}
The matrix $S$ is called an \emph{$\varepsilon$-subspace embedding} for $\mathcal{K}_{m+1}(A,\vb)$. 
Condition~\cref{eq:sketch} can equivalently be stated with the Euclidean inner product~\cite[Cor.~4]{sarlos2006improved}. 
In practice, such a matrix~$S$ is not explicitly available and we hence have to draw it at random to achieve~\cref{eq:sketch} with high probability. There are several ways to construct a random matrix $S$ with this property, see for instance the discussions in \cite[Sec.~2.3]{nakatsukasa2021fast} or \cite[Sec.~2.1]{balabanov2022randomized}. 

\revtwo{
In Krylov subspace methods for linear algebra problems, such as the GMRES method for linear systems \cite{SaadSchultz1986}, the orthogonality of the Krylov basis is exploited to compute the solution of the problem projected on the Krylov space efficiently.
If we forego the assumption that the Krylov basis is orthonormal, it is still possible to obtain an approximate solution to the projected problem by means of randomized sketching. 
This strategy allows us to replace the Arnoldi process with a cheaper procedure that constructs a non-orthogonal basis, such as truncated Arnoldi.
This kind of approach was first proposed in \cite{nakatsukasa2021fast} for the solution of linear systems of equations and Rayleigh--Ritz extraction of eigenvalues, and it was later extended in \cite{GS23,cortinovis2022speeding} for matrix function computations.

As a motivating example for our work, we briefly describe the sketched GMRES (sGMRES) procedure from \cite{nakatsukasa2021fast}.
Consider the linear system $A \vx = \vb$, with $A \in \R^{N \times N}$ and $\vx, \vb \in \R^{N}$. Assume that we have constructed a basis $V_m$ of the Krylov space~$\mathcal{K}_m(A, \vb)$ that is not necessarily orthonormal, and let $S \in \R^{s \times N}$ be a sketching matrix.
Recall that in its $m$-th iteration, GMRES computes the solution to the $N \times m$ least squares problem
\begin{equation*}
    \vy_m := \underset{\vy \in \R^m}{\arg\min} \norm{A V_m \vy - \vb }_2
\end{equation*}
and the associated approximate solution $\vx_m := V_m \vy_m$ to the linear system. In the case where $V_m$ is generated by the standard Arnoldi method, the least squares problem can be solved efficiently by exploiting the Arnoldi relation and the orthogonality of~$V_m$.

On the other hand, the sGMRES algorithm solves the $s \times m$ sketched least squares problem
\begin{equation*}
    \vy_m^S := \underset{\vy \in \R^m}{\arg\min} \norm{S ( A V_m \vy - \vb )}_2,
\end{equation*}
and computes the approximate solution $\vx_m^S := V_m \vy_m^S$. If $S$ is an $\varepsilon$-embedding for $\vspan([A V_m, \vb])$, it is easy to see that 
\begin{equation*}
    \norm{A \vx_m^S - \vb}_2 \le \frac{1 + \varepsilon}{1-\varepsilon} \norm{A \vx_m - \vb}_2.
\end{equation*}
By using an embedding dimension $s = 2(m+1)$, we typically obtain an $\varepsilon$-embedding with $\varepsilon = 1/\sqrt{2}$, which causes the sGMRES residual norm to be bounded from above by roughly $6$ times the GMRES residual norm with high probability.
We refer the reader to \cite[Section~3]{nakatsukasa2021fast} for further information on sGMRES and its implementation. 

By taking $s = O(m)$ and using an efficient subspace embedding such as the subsampled random cosine or Fourier transform~\cite{sorensen1993efficient,WoolfeLibertyRokhlinTygert2008}, requiring $O(N\log s)$ operations when applied to a single vector, the $s \times m$ sketched least squares problem in sGMRES can be solved with $O(Nm \log m + m^3)$ arithmetic operations. If we use a non-orthogonal basis $V_m$ constructed with a fast procedure, the resulting method can be more efficient than GMRES with a fully orthogonal basis, the latter requiring at least $O(Nm^2)$ arithmetic operations. 

}

One of the most straightforward approaches to efficiently generate the Krylov basis with a reduced number of projection steps\footnote{Mathematically, the term ``orthogonalization'' is no longer adequate when the basis is non-orthogonal, so we use the term projection step to refer to a vector operation in truncated Arnoldi.} is the \emph{truncated Arnoldi procedure.} Let a truncation parameter $k$ be given, then in place of \cref{eq:arn} we use the iteration 
\begin{equation*}
\widehat \vw_j := \vw_j - \sum_{i=\max\{ 1, j+1-k\}}^j h_{i,j} \vv_i, \quad h_{i,j} := \vv_i^T \vw_j.
\end{equation*}
Alternatively, this same truncated Arnoldi procedure can  be combined with sketching by replacing the coefficients $h_{i,j}$ by their sketched counterparts $\widetilde h_{i,j} := (S\vv_i)^T (S\vw_j)$. A~key benefit of truncation is that the $O(Nm^2)$ cost of the orthogonalization is reduced to $O(Nmk)$, i.e., it grows linearly with the Krylov basis dimension~$m$. 

The truncated Arnoldi procedure is likely inspired by the Lanczos process for a symmetric matrix~$A$, which is mathematically equivalent to truncated Arnoldi with $k=2$. The Faber--Manteuffel theorem \cite{faber1984necessary} gives a complete characterization of the matrices~$A$  for which there is a short-term recursion that generates an orthogonal set of Krylov basis vectors. The truncated Arnoldi procedure first appeared in the context of eigenvalue problems~\cite[Sec.~3.2]{Saad1980b} and linear systems~\cite[Sec.~3.3]{Saad1981}. For computations involving matrix functions, truncated Arnoldi has been used for the matrix exponential in~\cite{GaudreaultRainwaterTokman2018,Koskela2015} and for more general matrix functions in~ \cite{GS23,cortinovis2022speeding}. We also refer the reader to~\cite[Sec.~4]{nakatsukasa2021fast} and~\cite[Sec.~5]{PhilippeReichel2012}. 

\revtwo{An alternative strategy to construct a Krylov basis for sGMRES is to modify the Arnoldi algorithm in order to generate an orthonormal sketched basis, an approach that has been proposed and applied in \cite{balabanov2019randomized, balabanov2021randomized, balabanov2021randomizedblock}.}
\revtwo{More precisely,} the inner products computed in~\cref{eq:arn} \revtwo{are replaced} by inner products on sketched vectors
\begin{equation}\label{eq:arns}
\widehat \vw_j := \vw_j - \sum_{i=1}^j \widetilde h_{i,j} \vv_i, \quad \widetilde h_{i,j} := (S\vv_i)^T (S\vw_j).\nonumber
\end{equation}
Effectively, the process then computes an orthonormal sketched basis $S V_{m+1}$. Using an efficient subspace embedding that requires $O(N \log s)$ operations when applied to a single vector, the overall complexity is now  
\[
O(m\cdot \texttt{nnz}(A) + mN\log s+ N m^2).
\]
\revone{Even though the cost still grows quadratically with~$m$, this approach lowers the cost of computing all $(m+1)m/2$ inner products from $N (m+1)m/2$ to $s (m+1)m/2$, reducing the total number of flops roughly by a factor of~$2$ and requiring less communication compared to Arnoldi with full orthogonalization.}
It follows from~\cite[Cor.~2.2]{balabanov2022randomized} that
\begin{equation}\label{eq:basiscond}
    \bigg(\frac{1-\varepsilon}{1+\varepsilon}\bigg)^{1/2} \cond(SV_{m+1})  \leq \cond(V_{m+1}) 
    \leq
    \bigg(\frac{1+\varepsilon}{1-\varepsilon}\bigg)^{1/2} \cond(SV_{m+1}),
\end{equation}
so the computed Krylov basis $V_{m+1}$ will be close to orthonormal provided that $\varepsilon$ is sufficiently small (i.e., $s$ is sufficiently large which, in practice, means choosing~$s$ between $2m$ and $4m$). \revtwo{This basis generation method does not provide the same speedup as truncated Arnoldi, but it has great stability properties due to the guarantees on the condition number of $V_{m+1}$.}

\revone{It has been observed in \cite{nakatsukasa2021fast} that sGMRES has a convergence behavior similar to GMRES as long as the condition number of the Krylov basis~$V_m$ remains bounded away from the inverse of the machine precision.}
Unfortunately, \revone{for several problems} the Krylov bases generated by truncated Arnoldi (with and without sketching) can become severely ill-conditioned even for moderate~$m$, \revone{limiting the applicability of sketched Krylov subspace methods}. 
\revone{The goal of this paper is to provide a procedure to construct a Krylov basis $V_m$ with the same $O(Nmk)$ runtime as truncated Arnoldi which reduces as much as possible the growth of the condition number of~$V_m$. The hope is that this will increase the number of problems to  which methods such as sketched GMRES can be applied  successfully.

We propose and test an alternative approach \revone{to truncated Arnoldi} which we call the \emph{sketch-and-select Arnoldi process.} The key idea is simple: instead of projecting each new Krylov basis vector against the $k$ previous basis vectors, we use the sketched version of the Krylov basis to identify $k$~candidates for the projection. \revone{The selection of basis vectors for the projection step is framed in terms of sparse least squares approximation, a problem that has been extensively studied in statistical learning and compressive sensing.} 
We then demonstrate with performance profiles that the sketch-and-select Arnoldi process with a simple select strategy to determine the candidate vectors often leads to much better conditioned Krylov bases, outperforming truncated Arnoldi and many other tested methods. \revone{We test the performance of sketch-and-select Arnoldi as a basis constructor within sGMRES, showing that compared to truncated Arnoldi it frequently allows us to attain a moderately higher accuracy before the basis becomes too ill-conditioned.} \revtwo{On the other hand, we also provide numerical evidence for situations where the basis condition number does not seem to be the key quantity controlling the numerical behaviour of sGMRES, leading to interesting open questions for future work.}

}

\section{The sketch-and-select Arnoldi process}\label{sec:ssa}

At iteration $j$ of the sketch-and-select Arnoldi process we compute $\vw_j := A\vv_j$ as in the standard Arnoldi process, but then aim to determine coefficients $h_{i,j}$ ($i=1,\ldots,j$) of which at most $k$ are nonzero. We then compute 
\begin{equation}\label{eq:arnss}
\widehat \vw_j := \vw_j - \sum_{h_{i,j}\neq 0} h_{i,j} \vv_i
\end{equation}
and set $\vv_{j+1}:=\widehat \vw_j/h_{j+1,j}$ with a suitable scaling factor~$h_{j+1,j}$. To obtain the nonzero coefficients $h_{i,j}$ we use the sketched Krylov basis $SV_j$ and the sketched vector $S\vw_j$. For any iteration $j>k$, we \revone{propose to select the index set $I$ of the nonzero coefficients $h_{i,j}$ by approximately solving} the following sparse least squares problem: select an index set $I \subseteq \{ 1,2,\ldots,j \}$ with $|I| = k$ as 
\[
   \underset{I}{\arg\min} \underset{\vh\in\mathbb{R}^{|I|}}{\min} \| S\vw_j - SV_j(\,:\,,I) \vh  \|.
\]
Here we have used MATLAB notation to denote column selection. Given the index set~$I$, the components of the best $\vh$ are then used as the projection coefficients~$h_{i,j}$ in~\cref{eq:arnss}. Finally, the scaling factor $h_{j+1,j}$ is chosen so that $\|S \vv_{j+1}\| = 1$.

The determination of an optimal index set $I$ is also known as best subset selection problem, a classic topic of model selection in statistical learning \cite{miller2002subset}.  There are two main variants of this problem; (i) the problem considered above where the sparsity level~$k$ is prescribed, and (ii) for a given tolerance $\epsilon >0$, the problem of selecting an index set~$I$ with fewest possible elements so that $\min_{\vh\in\mathbb{R}^{|I|}} \| S\vw_j - SV_j(\,:\,,I) \vh  \|\leq \epsilon$. It is known that the determination of a global minimizer for such problems is NP-hard; see~\cite{natarajan1995sparse}. Nevertheless, a vast amount of literature has been devoted to developing efficient optimization algorithms for this task. A review of these methods is beyond the scope of this paper, so we refer to the excellent overview in~\cite{bertsimas}.

A simple approach to selecting the index set~$I$ is to  retain the  $k$ components of $\vh := (SV_j)^\dagger (S\vw_j)$ which are largest in modulus, ignoring the remaining $j-k$ components. We found this to perform very well in the  experiments reported in \cref{sec:numex}, and we present a basic MATLAB implementation in \cref{alg:ssa}. We refer to this variant as \texttt{sketch + select pinv} (for ``pseudoinverse'').

Another approach is to select $I$ as above, but then to recompute the corresponding coefficients as $\vh = \revone{(}SV_j(\,:\,,I)\revone{)}^\dagger (S\vw_j)$. This ensures that the projected $S\widehat \vw_j$ is orthogonal to $\vspan(SV_j(\,:\, ,I))$. We refer to this variant as \texttt{sketch + select pinv2} (as two pseudoinverses are computed).

In the variant \texttt{sketch + select corr} (for ``correlation'') we select $I$ as the components of $(SV_j)^T (S\vw_j)$ which are largest in modulus, using the $k$ inner products as the projection coefficients $h_{i,j}$.\footnote{The \texttt{sketch + select corr}  variant has been hinted at in \cite[version~1, Sec.~4.3]{nakatsukasa2021fast}: ``Indeed, $(\mathbf S\mathbf q_i)^* (\mathbf \mathbf S\mathbf A\mathbf q_j) \approx \mathbf q_i^* \mathbf A\mathbf q_j$ for all $i \leq j$, so we can choose to orthogonalize $\mathbf A\mathbf q_j$ only against the basis vectors $\mathbf q_i$ where the inner product is nonnegligible.''}  
Alternatively, we can recompute the projection coefficients as $\vh = \revone{(}SV_j(\,:\,,I)\revone{)}^\dagger (S\vw_j)$, referred to as variant \texttt{sketch + select corr pinv}. 

Finally, we also test three popular methods for sparse approximation, namely orthogonal matching pursuit (OMP)~\cite{pati1993orthogonal,tropp2004greed}, subspace pursuit (SP)~\cite{dai2009subspace}, and the ``Algorithm Greedy'' from \cite{natarajan1995sparse}. We have chosen these methods due to their popularity but also because they naturally allow for a fixed sparsity level~$k$, as opposed to, e.g., LASSO~\cite{tibshirani1997lasso}.

\revtwo{
OMP is a greedy method that, for each Arnoldi iteration~$j$, \revone{selects the $k$ columns of $SV_j$ for the projection sequentially.} \revone{OMP runs for $k$ iterations,} with the $i$-th inner iteration involving \revone{the computation of $j$ sketched inner products,}  the solution of an $s \times i$~least squares problem with the currently selected columns of $S V_j$ \revone{and the computation of a matrix-vector product with the $s \times i$ matrix with the currently selected columns}. \revone{Since the columns are added in sequence, one in each \revone{inner} iteration, the $s \times i$ least squares problem in the $i$-th OMP iteration can be solved efficiently by updating a QR factorization of the matrix containing the columns selected up to the $i$-th iteration. We refer to \cite[Section~II.C]{tropp2004greed} for a complete description of the OMP algorithm.} 

SP\revone{ \cite[Algorithm~1]{dai2009subspace}} is also an iterative method \revone{which, in contrast to OMP, selects the $k$ indices for projection by examining sets of $k$ columns at a time: in each iteration, the current selection is updated by considering $k$ additional candidate columns, and retaining only $k$ of the~$2k$ available columns according to a certain criterion.} We have fixed the number of SP iterations to~1, \revone{since this already yields good results}. As a result, the operations performed by SP in the $j$-th Arnoldi iteration are \revone{the computation of $2j$ sketched inner products,} a matrix-vector product with an $s \times k$ matrix, and the solution of two $s \times k$ least squares problems, as well as a least squares problem with a matrix of size at most $s \times 2k$ formed with selected columns of~$S V_j$.  

The ``Algorithm Greedy'' from \cite{natarajan1995sparse} is an iterative method that requires $k$ iterations for each Arnoldi iteration~$j$, and each inner iteration involves the computation of~$2j$ inner products between sketched vectors, for a total cost of $2jk$ sketched inner products, and the solution of one $s \times k$ least squares problem in the $j$-th Arnoldi iteration. \revone{See \cite[Algorithm Greedy]{natarajan1995sparse} for a pseudocode of this algorithm.}

In terms of computational cost, all sketch-and-select variants only perform operations on  small sketched matrices and vectors to determine the projection coefficients. Hence these computations are comparably cheap, but \texttt{sketch + select corr} and \texttt{sketch + select pinv} are the cheapest methods, as  in the $j$-th Arnoldi iteration they only require, respectively, the computation of a matrix-vector product with a matrix of size $s \times j$ and the solution of a single $s\times j$ least squares problem. 
}

\begin{figure}[ht]
\begin{Verbatim}[frame=single]
function [V, H, SV, SAV] = ssa(A, b, m, k)
%% Sketch-and-select Arnoldi process

sw = sketch(b); nsw = norm(sw);
SV(:,1) = sw/nsw; V(:,1) = b/nsw; H = [];
for j = 1:m
    w = A*V(:,j); 
    sw = sketch(w); SAV(:,j) = sw; 
    % the following two lines perform the select operation
    coeffs = SV(:,1:j) \ sw;
    [~,ind] = maxk(abs(coeffs),k);
    w = w - V(:,ind)*coeffs(ind);
    sw = sw - SV(:,ind)*coeffs(ind);
    nsw = norm(sw); 
    SV(:,j+1) = sw/nsw; V(:,j+1) = w/nsw;
    H(ind,j) = coeffs; H(j+1,j) = nsw;
end
\end{Verbatim}
\vspace*{-1mm}
\caption{Basic (non-optimized) MATLAB implementation of the sketch-and-select Arnoldi process. The function uses a \texttt{sketch} function that takes as input a vector with $N$ components and returns a sketch with $s\ll N$ components. In this variant, which we refer to as \texttt{sketch + select pinv}, the indices of Krylov basis vectors to project out are obtained by keeping the $k$ coefficients of the orthogonal projection which are largest in modulus. In an efficient implementation, the least squares problems with the sketched basis $S V_j$ should be solved by retaining and updating at each iteration a QR factorization of $S V_j$.
\label{alg:ssa}}
\end{figure}

\section{Numerical experiments}\label{sec:numex}

We now test variants of the proposed sketch-and-select Arnoldi process on a range of matrices from the SuiteSparse Matrix Collection (formerly the University of Florida Sparse Matrix Collection \cite{davis2011university}), a widely used set of sparse matrix benchmarks collected from a wide range of applications. We include 80 matrices $A$ in our test, which correspond to all square numerically nonsymmetric matrices in the collection (as of June 2023) with sizes between $N=10^4$ and $10^6$. The starting vector~$\vb$ is chosen at random with unit normally distributed entries and kept constant for all tests with the same matrix dimension. 
The MATLAB scripts to reproduce the experiments in this section are available at \url{https://github.com/simunec/sketch-select-arnoldi}.

In the tests below we compare the seven variants of the sketch-and-select Arnoldi process introduced in \cref{sec:ssa}, each using a different method for the sparse least squares problem, as well as the truncated Arnoldi process with and without sketching.

\subsection{Illustration with a single matrix}
\label{subsec:experiments--illustration-with-single-matrix}
For our first experiment we plot in \cref{fig:test1} the condition number $\cond(V_m)$ as a function of $m$ for the SuiteSparse problem  \texttt{Norris/torso3}, a matrix of size $N=259,156$ corresponding to a finite difference electro-physiological 3D model of a torso. We use a truncation parameter of $k=2$ and $k=5$ and perform $m=100$ Arnoldi iterations. The sketching operator is the subsampled random Hadamard transform (SRHT) \cite[Sec.~1.3]{balabanov2021randomizedblock} with an embedding dimension of~$s=200$. 
\revone{Very similar results were obtained by running the same experiments with a subsampled random cosine transform as a sketching operator.}

We see from \cref{fig:test1} that most sketch-and-select Arnoldi variants exhibit a much smaller condition number growth than truncated Arnoldi. The main exception is \texttt{sketch + select corr} which performs very badly in both cases, and \texttt{sketch + select corr pinv} which is only acceptable for $k=5$. Surprisingly, the variant \texttt{sketch + select pinv2} breaks down after about $m=80$ Arnoldi iterations in the case $k=2$, but it works very well for $k=5$. The four most reliable variants are \texttt{sketch + select pinv}, \texttt{sketch + select OMP}, \texttt{sketch + select SP}, and \texttt{sketch + select greedy}, all leading to a rather slow growth of the condition number. 

\begin{figure}[h!]
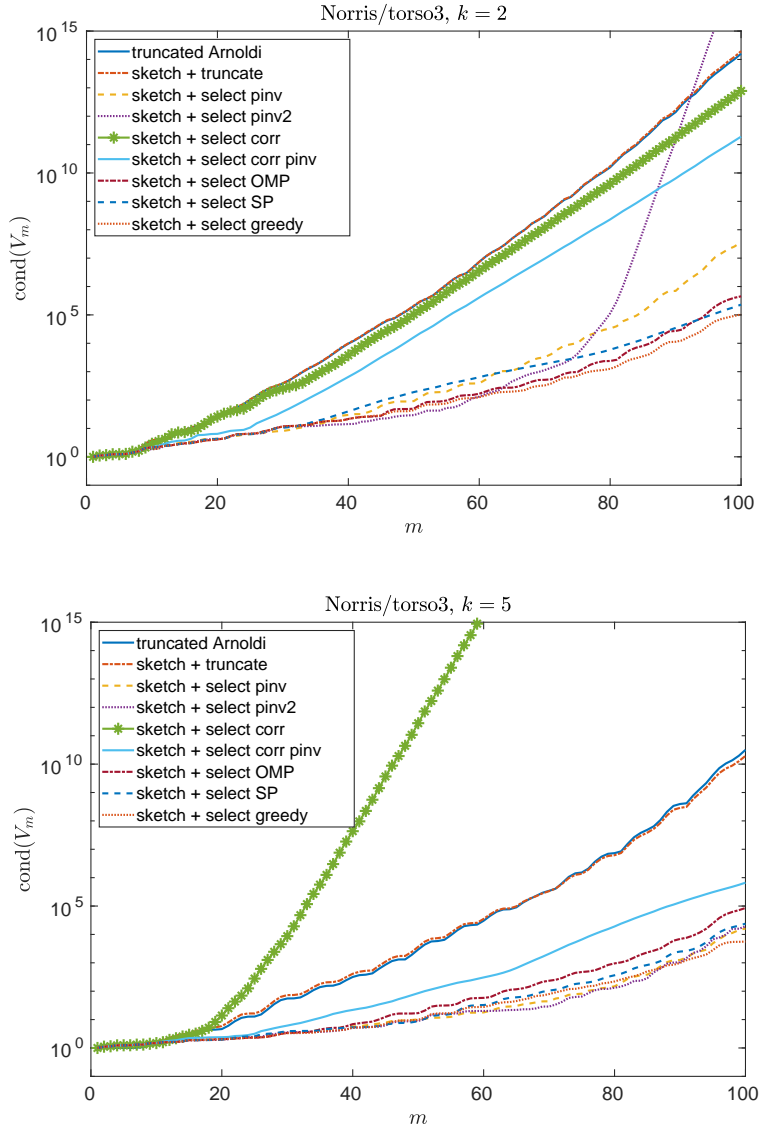

    \centering
    \includegraphics[scale=0.4]{test1_1_m100_s2_k2.pdf}
    \bigskip%
    
    \includegraphics[scale=0.4]{test1_1_m100_s2_k5.pdf}
    \caption{Basis condition number growth of nine different methods for the matrix \texttt{Norris/torso3}, using a truncation parameter of $k=2$ (top) and $k=5$ (bottom).}
    \label{fig:test1}
\end{figure}

\subsection{Performance profiles}
Our next tests involve all 80~matrices of the SuiteSparse Matrix Collection and we use performance profiles \cite{dolan2002benchmarking} to visualize the results. In a performance profile, each algorithm is represented
by a non-decreasing curve in a $\theta$–$y$ graph. The $\theta$-axis represents a tolerance
$\theta\geq 1$ and the $y$-axis corresponds to a fraction $y\in [0, 1]$. If a curve passes
through a point $(\theta,y)$ this means that the corresponding algorithm performed within
a factor $\theta$ of the best observed performance on $100 y$\% of the test problems. For
$\theta  = 1$ one can read off on what fraction of all test problems each algorithm was
the best performer, while as $\theta \to \infty$ all curves approach the value $y = 1$, unless
an algorithm has failed on a fraction of the test problems.

For each test problem, we run each algorithm until the condition number of the constructed basis becomes larger than $10^{12}$, up to a maximum basis dimension. The performance ratio is computed as the inverse of the basis dimension that is reached, so that, e.g., $\theta=2$ would correspond to an algorithm that generates a Krylov basis with half the size of the basis generated by the best algorithm.

The top panel in \cref{fig:test2a} shows the performance profiles for a target basis condition number of $10^{12}$ and a maximum basis dimension of $m = 100$, with a truncation parameter of $k=2$. The embedding dimension is $s = 200$. 
The four most reliable variants are \texttt{sketch + select pinv}, \texttt{sketch + select OMP}, \texttt{sketch + select SP}, and \texttt{sketch + select greedy}, and they are almost indistinguishable in performance. \revtwo{Among those methods, \texttt{sketch + select pinv} is the most straightforward to implement and the most computationally efficient.} The bottom panel in \cref{fig:test2a} displays the dimensions of the Krylov bases constructed for each test matrix by the four best performing algorithms and by truncated Arnoldi. The matrices are sorted so that the dimensions of the bases generated by truncated Arnoldi are in non-decreasing order.

\begin{figure}[h!]
    \centering
    \includegraphics[scale=0.4]{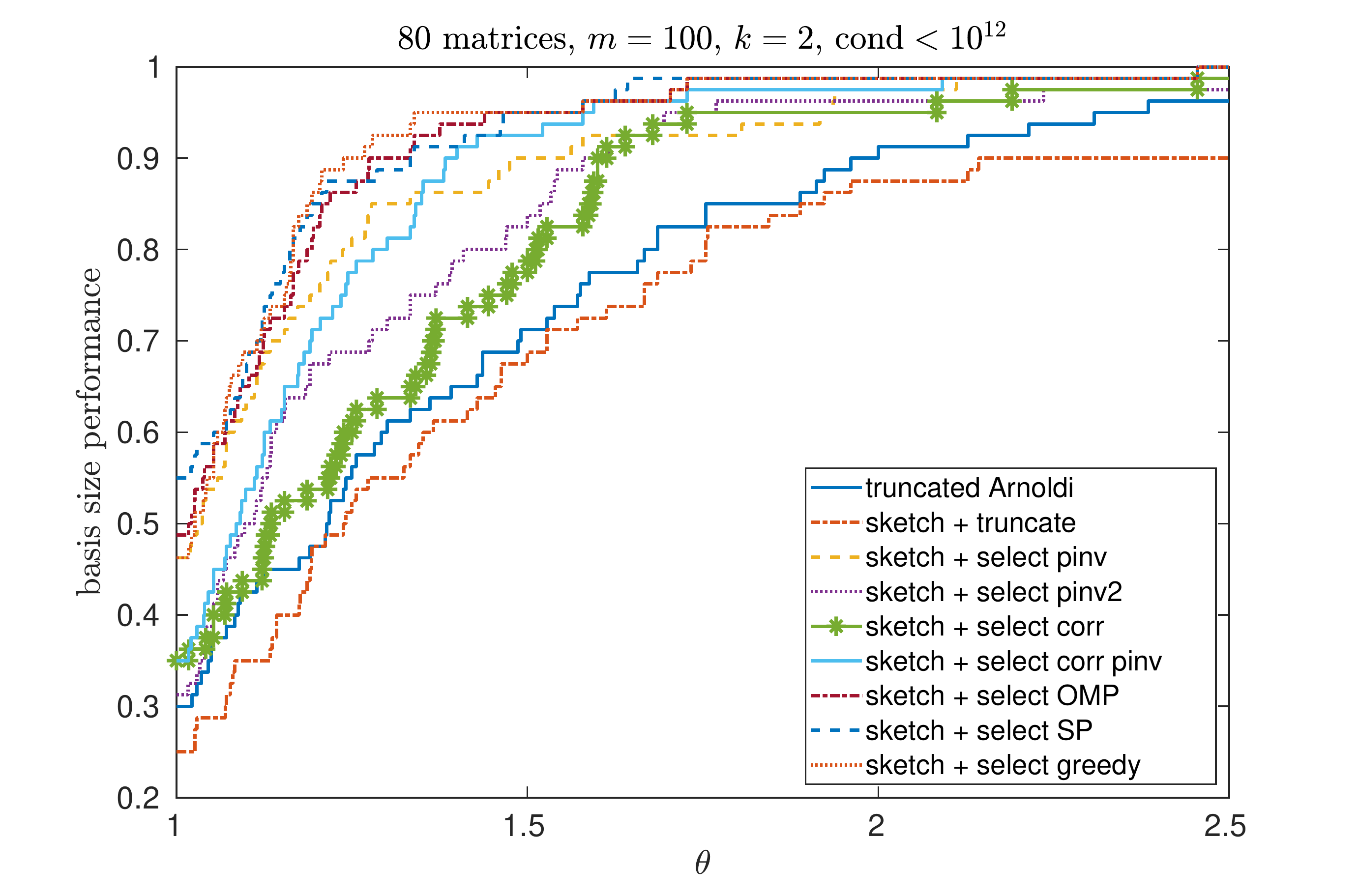}
    \bigskip%
    
    \includegraphics[scale=0.4]{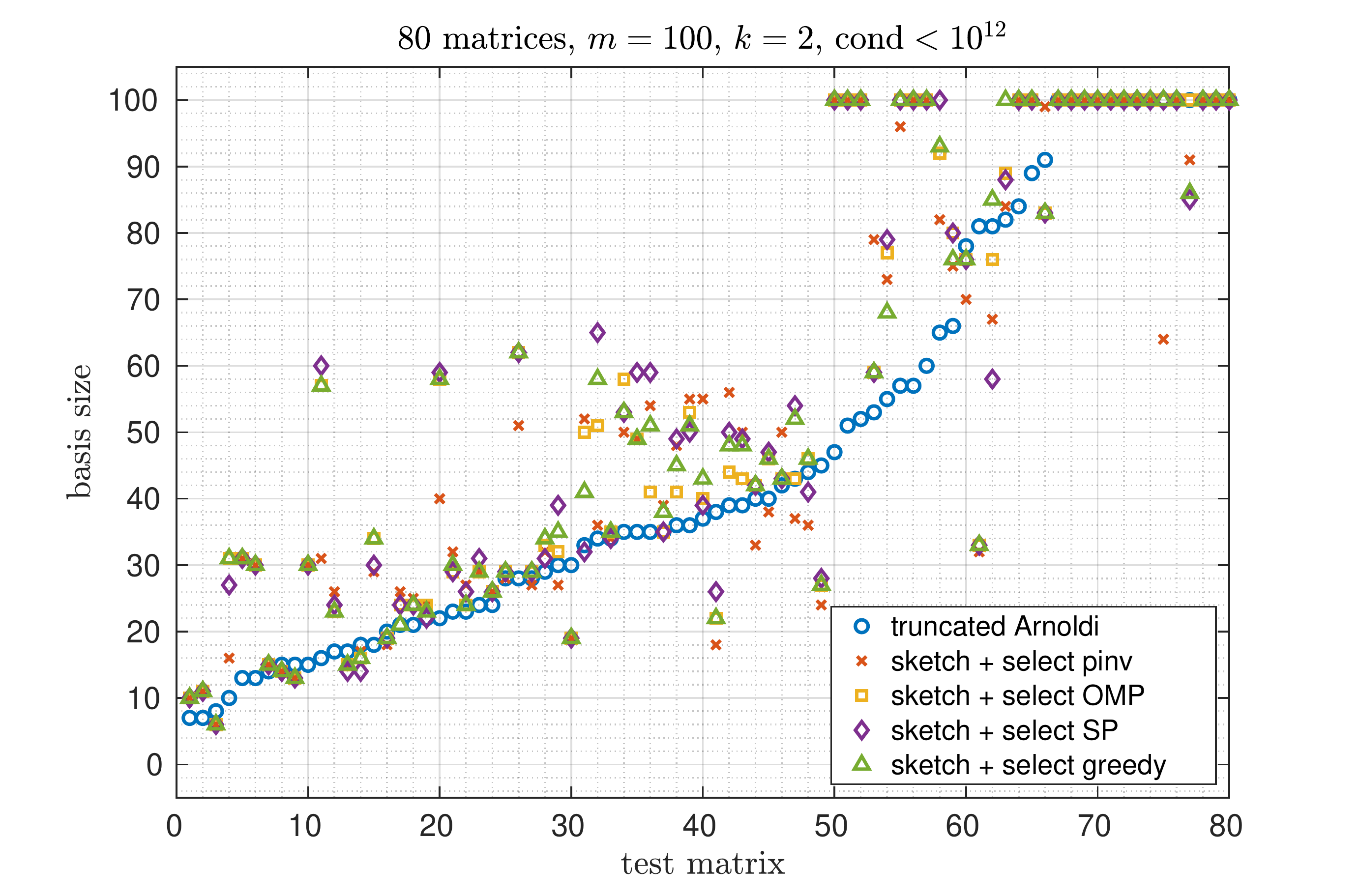}
    \caption{Basis sizes reached by the different methods, using a truncation parameter of $k=2$, with maximum basis size $m = 100$ and condition number bounded by $10^{12}$. Performance profiles (top) and basis sizes for the best performing methods (bottom).}
    \label{fig:test2a}
\end{figure}

A similar picture emerges in \cref{fig:test2b,fig:test2c}, where we have increased the truncation parameter to $k=5$ and $k=10$, respectively; the maximum basis dimension is increased  to $m = 150$ and $m = 200$, respectively, and the embedding dimension is chosen as $s = 2m$. With these parameters, the difference in performance between truncated Arnoldi and the best \texttt{sketch + select} variants becomes more significant, and the \texttt{sketch + select pinv} variant can be seen to perform slightly better than \texttt{sketch + select OMP}, \texttt{sketch + select SP} and \texttt{sketch + select greedy}.

\begin{figure}[h!]
    \centering
    \includegraphics[scale=0.4]{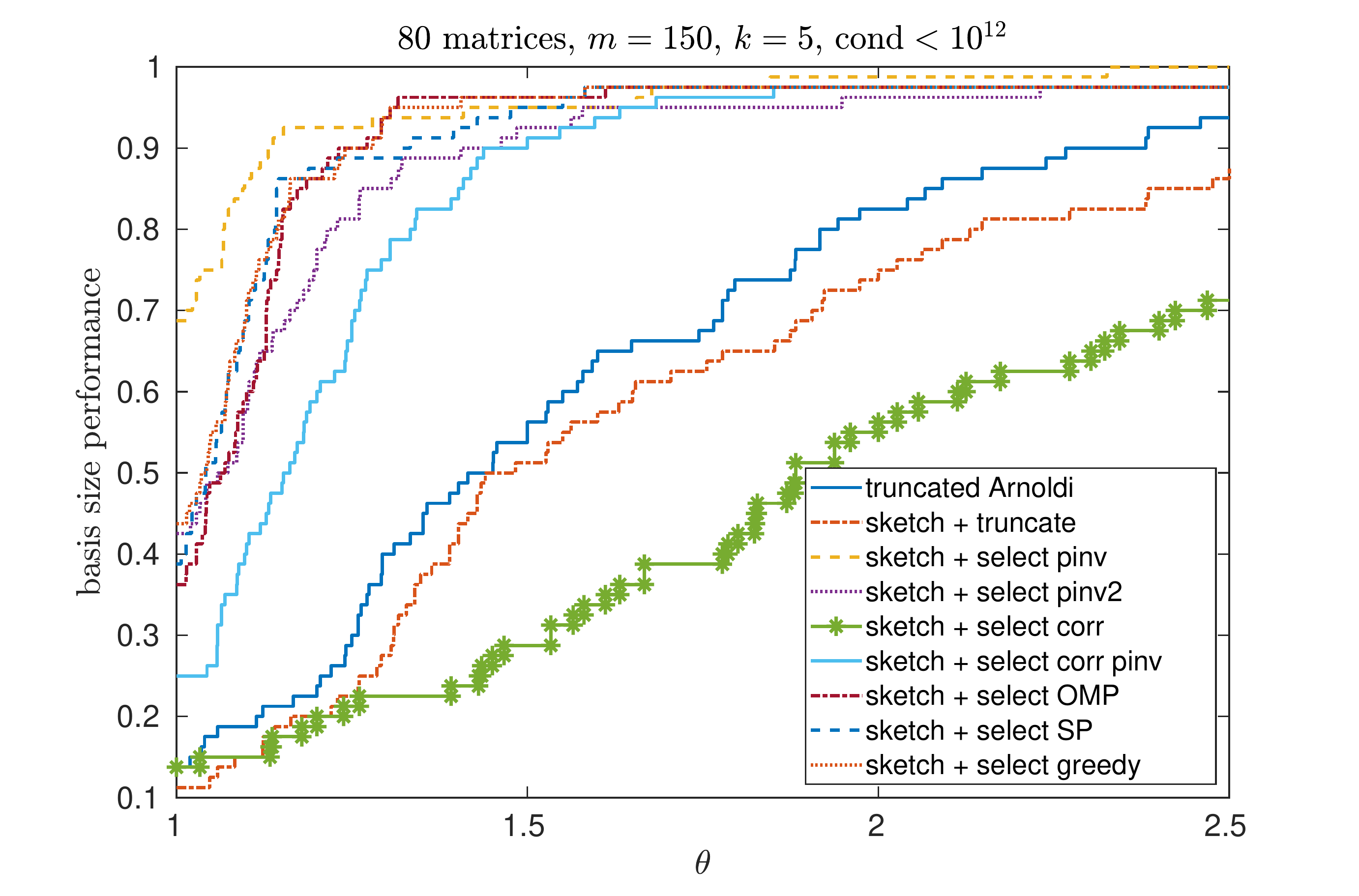}
    \bigskip%
    
    \includegraphics[scale=0.4]{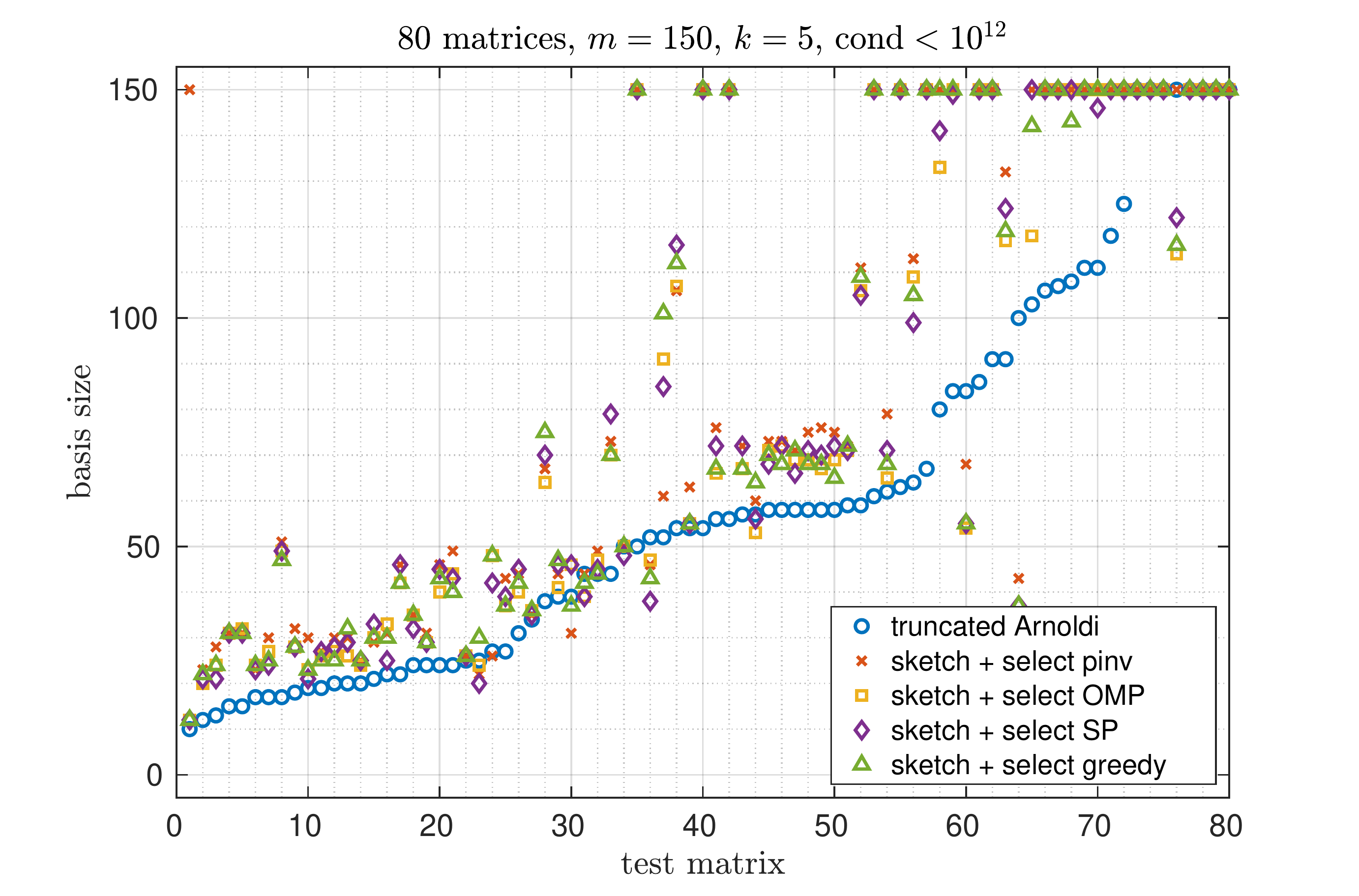}
    \caption{Basis sizes reached by the different methods, using a truncation parameter of $k=5$, with maximum basis size $m = 150$ and condition number bounded by $10^{12}$. Performance profiles (top) and basis sizes for the best performing methods (bottom).}
    \label{fig:test2b}
\end{figure}

\begin{figure}[h!]
    \centering
    \includegraphics[scale=0.4]{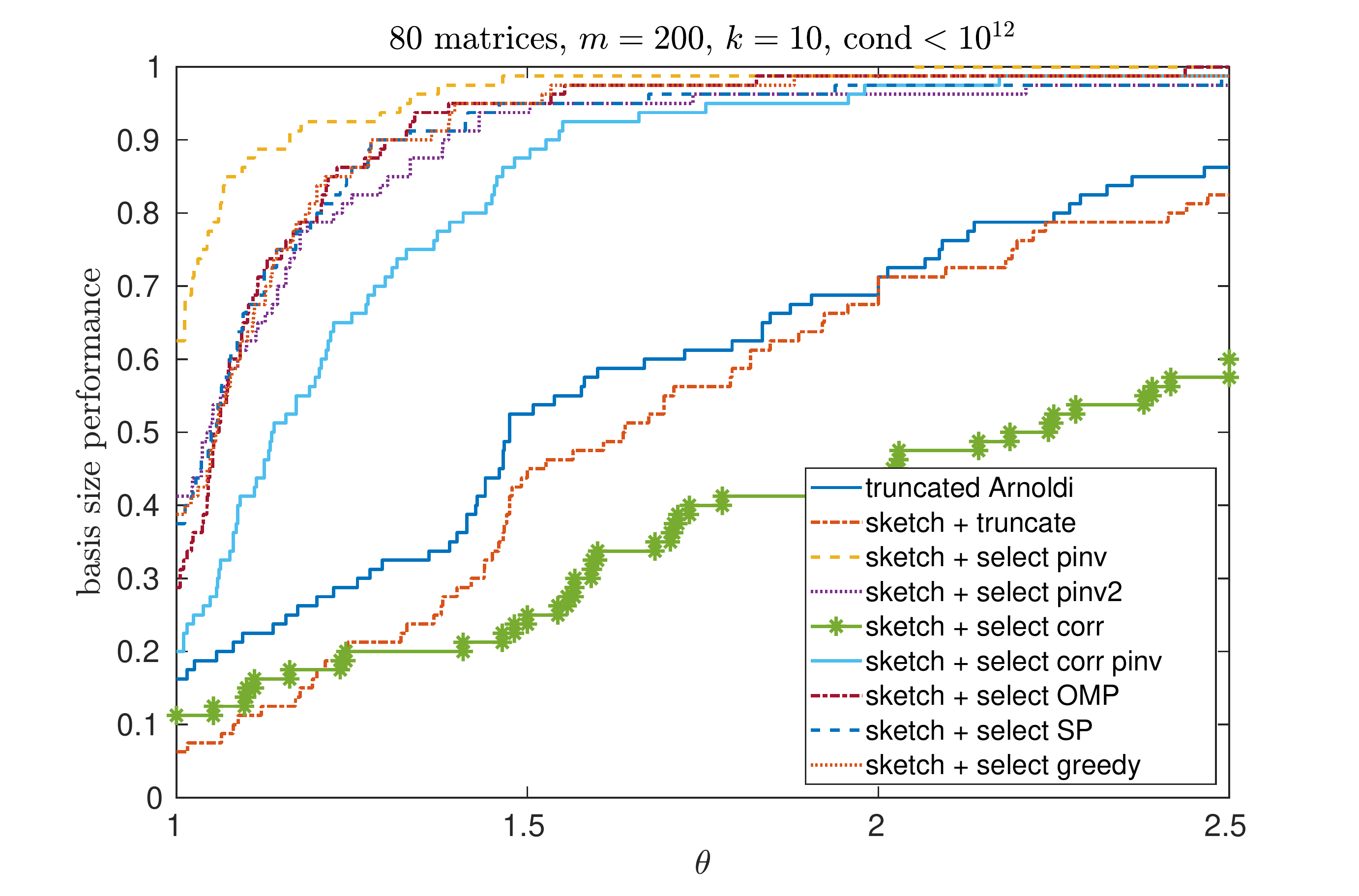}
    \bigskip%
    
    \includegraphics[scale=0.4]{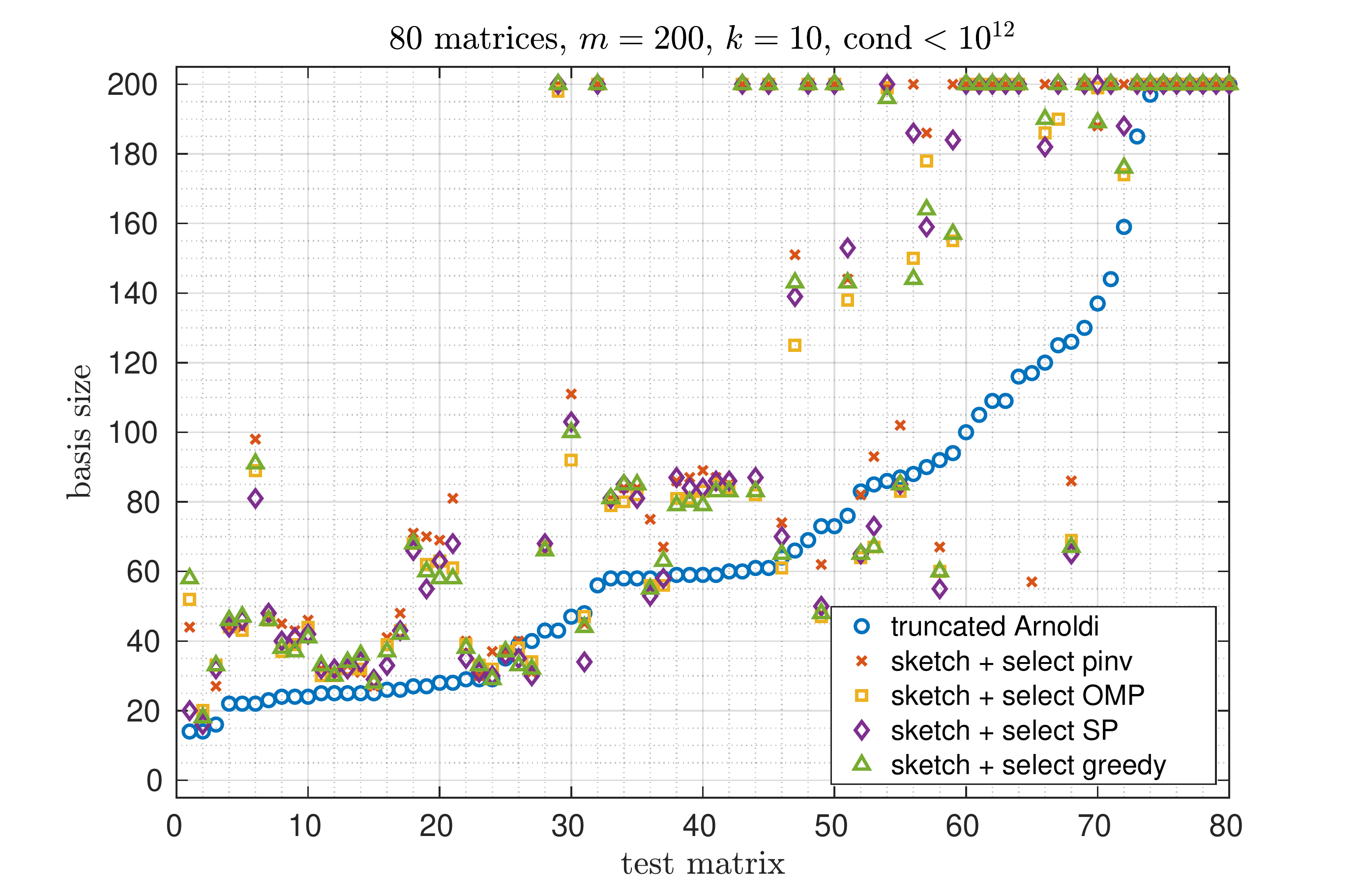}
    \caption{Basis sizes reached by the different methods, using a truncation parameter of $k=10$, with maximum basis size $m = 200$ and condition number bounded by $10^{12}$. Performance profiles (top) and basis sizes for the best performing methods (bottom).}
    \label{fig:test2c}
\end{figure}

\subsection{The effect of the starting vector}
\label{subsec:experiments--starting-vector}

In the previous experiments, we have used a starting vector $\vec b$ with random unit normally distributed entries. To investigate the influence of the starting vector on the performance of the algorithms, we repeat the experiment of \cref{fig:test2b} using as vector $\vb$ the first canonical unit vector $\ve_1$, instead of a random vector. The results are reported in \cref{fig:test3a}. \revone{Surprisingly, truncated Arnoldi performs significantly better with this starting vector, while the performance of the sketch-and-select variants degrades (relative to truncated Arnoldi). For example, the number of matrices for which a basis dimension of 100 is reached with truncated Arnoldi increases from~17 to~28 (comparing \cref{fig:test2b} with \cref{fig:test3a}).}

In \cref{fig:test3b} we repeat the same experiment by slightly perturbing the starting vector, i.e., we take $\vb = \ve_1 + 10^{-15} \ve$, where $\ve$ denotes the vector of all ones. With this change, the performance of the sketch-and-select Arnoldi variants improves significantly relative to truncated Arnoldi, though not to the same level of what was observed in the experiment in \cref{fig:test2b} with a random starting vector. A very similar improvement is obtained with a small random perturbation of the vector~$\vb$. 

While it currently appears to be impossible to make any general statements about the dependence of relative performance of sketch-and-select Arnoldi on the starting vector~$\vb$, we observed that truncated Arnoldi can produce artificially well conditioned bases for certain starting vectors. For example, it may happen that sparse basis vectors constructed by truncated Arnoldi have disjoint supports, and so they are all orthogonal to each other. (One example is the matrix \texttt{Norris/lung2}: when $\vb=\ve_1$, the first 452~Krylov basis vectors produced by truncated Arnoldi with truncation parameter $k\geq 2$ are given by $\pm\ve_{2j-1}$.) The sketching-based methods do not ``see'' the sparsity of the basis vectors and hence cannot produce this exact orthogonality, losing performance relative to truncated Arnoldi. Adding a small (random) perturbation to the starting vector~$\vb$ removes the sparsity and hence reduces the appearance of such artificial cases.

\begin{figure}[h!]
    \centering
    \includegraphics[scale=0.4]{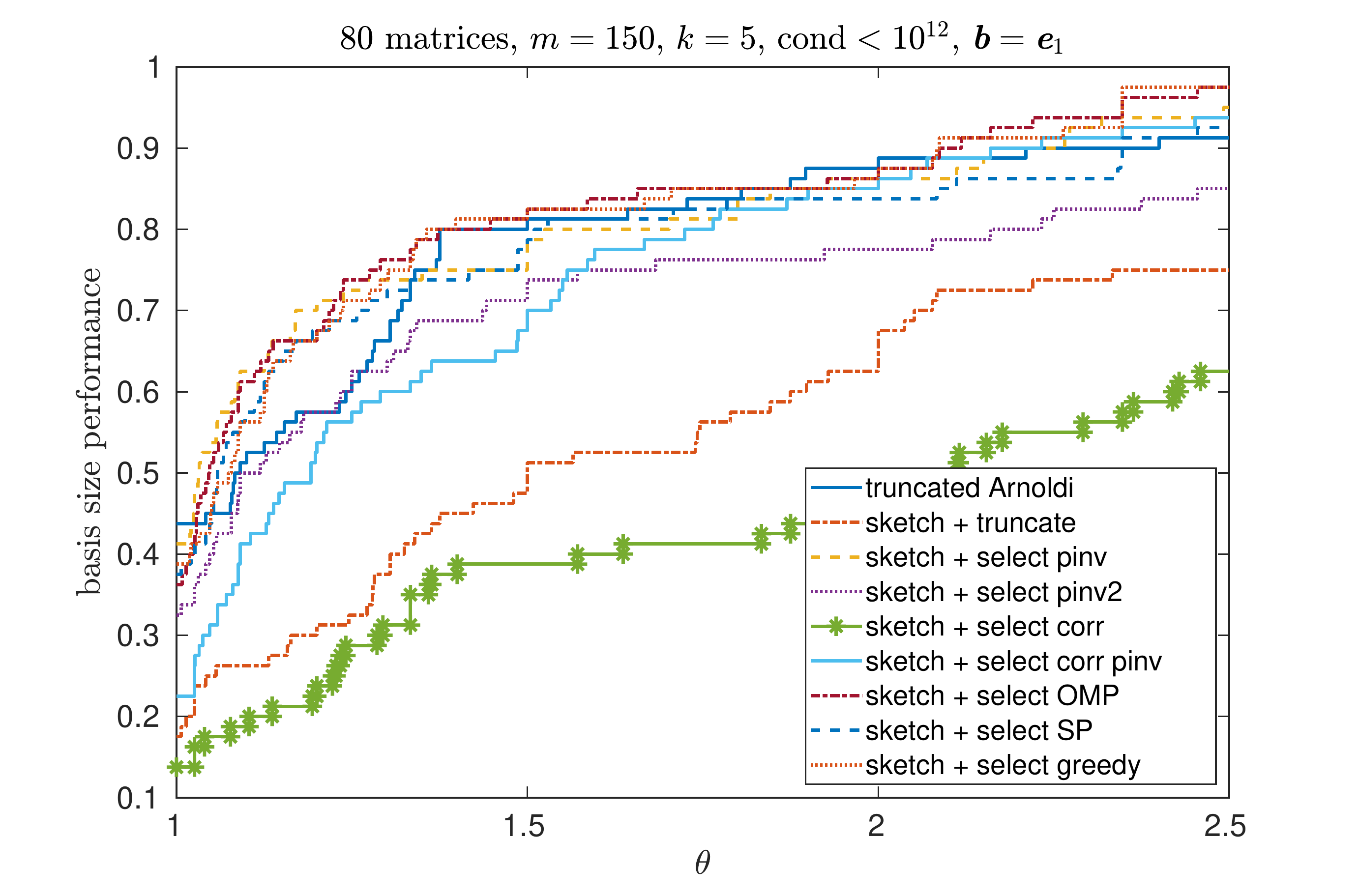}
    \bigskip%
    
    \includegraphics[scale=0.4]{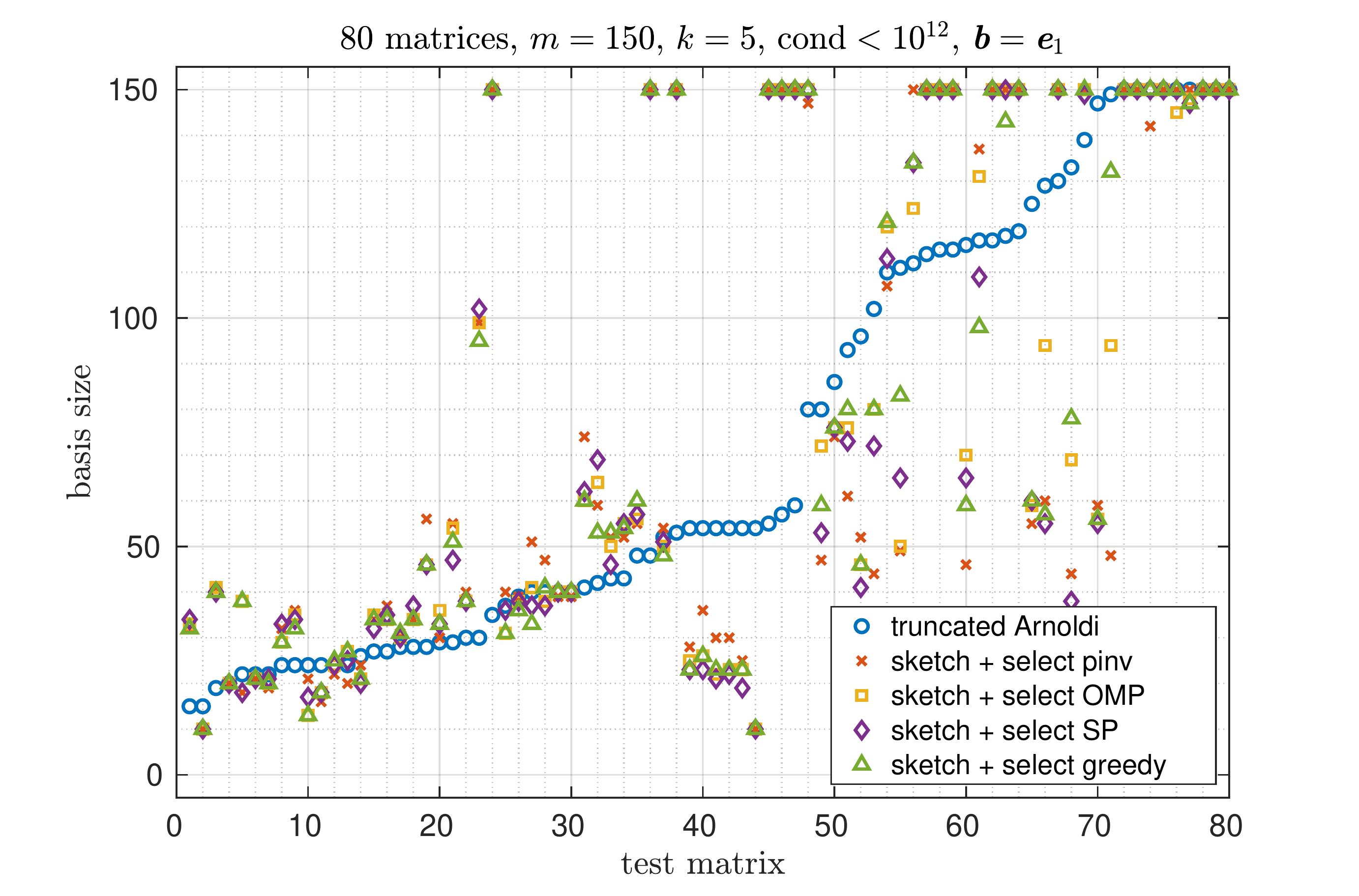}
    \caption{Basis sizes reached by the different methods, using a truncation parameter of $k=5$, with maximum basis size $m = 150$ and condition number bounded by $10^{12}$. The starting vector is $\vb = \ve_1$. Performance profiles (top) and basis sizes for the best performing methods (bottom).}
    \label{fig:test3a}
\end{figure}

\begin{figure}[h!]
    \centering
    \includegraphics[scale=0.4]{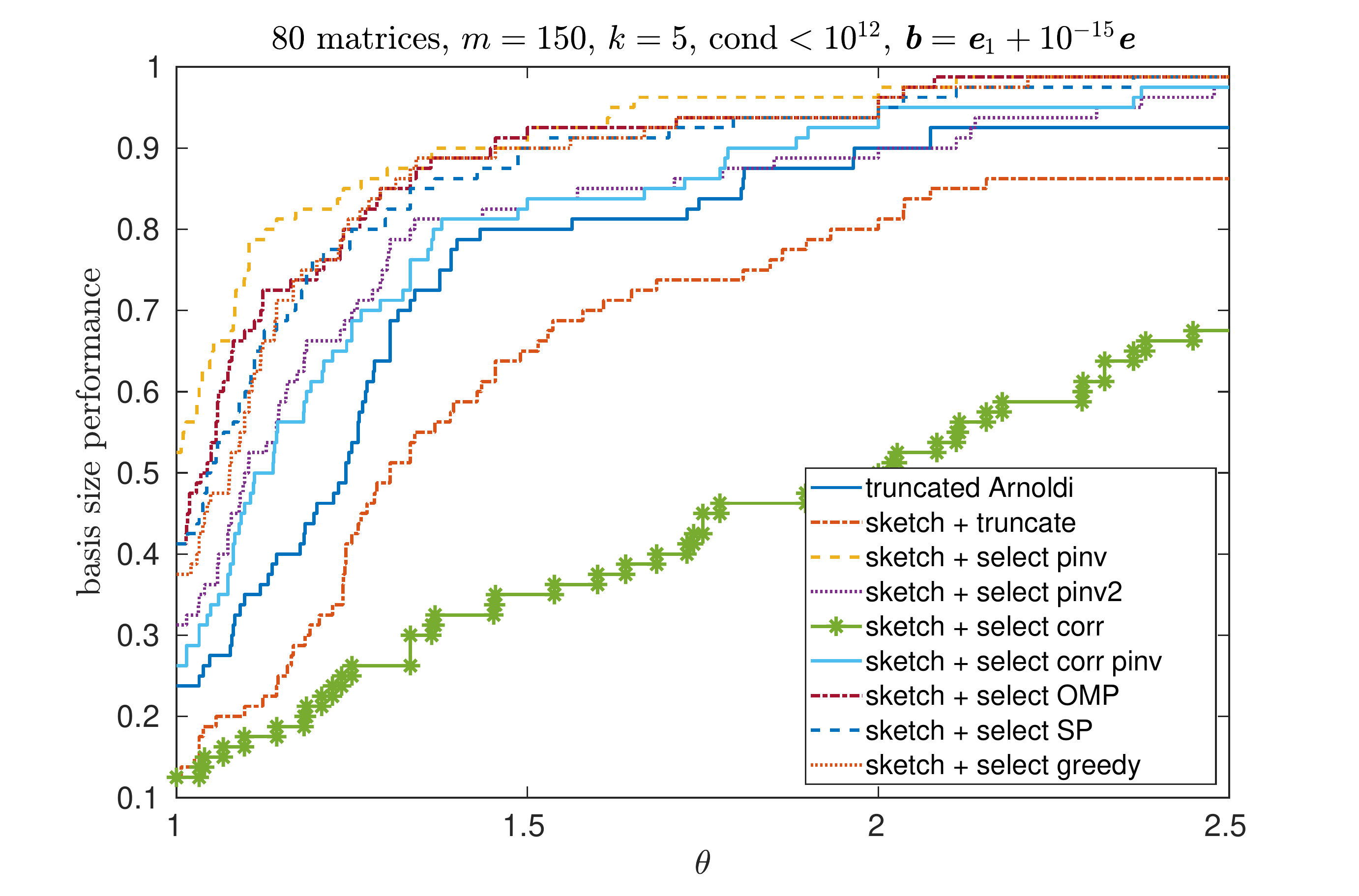}
    \bigskip%
    
    \includegraphics[scale=0.4]{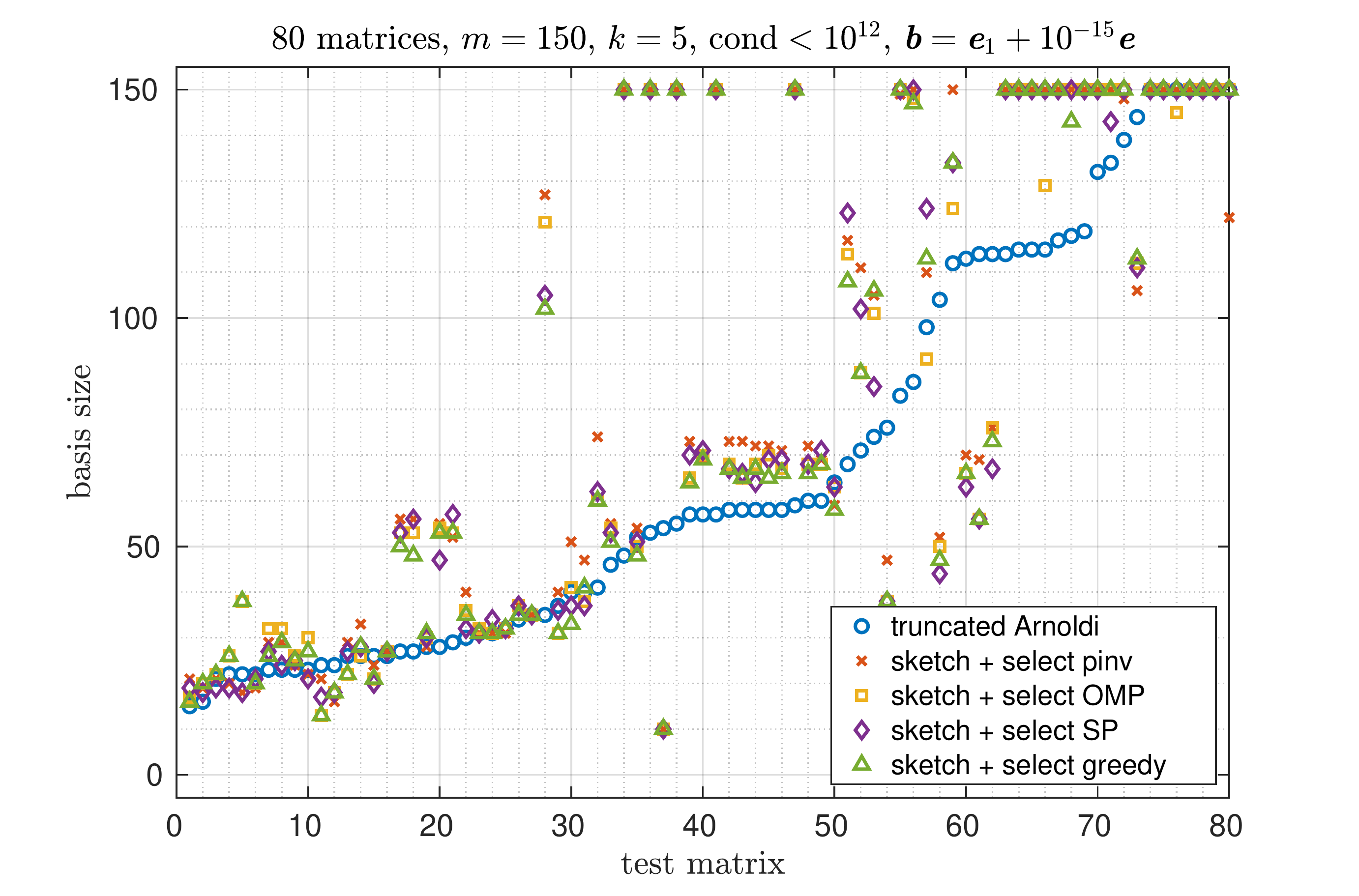}
    \caption{Basis sizes reached by the different methods, using a truncation parameter of $k=10$, with maximum basis size $m = 150$ and condition number bounded by $10^{12}$. The starting vector is $\vb = \ve_1 + 10^{-15}\ve$. Performance profiles (top) and basis sizes for the best performing methods (bottom).}
    \label{fig:test3b}
\end{figure}

\revone{

\subsection{Application to sketched GMRES}
\label{subsec:experiments-sgmres}

We now test the performance of the sketch-and-select Arnoldi variants as basis constructors within the sGMRES method for solving linear systems. 

For the experiments in this section, from the 80 test matrices used in the previous experiments we select the $10$ matrices on which GMRES is able to obtain a residual smaller than $10^{-3}$ within its first $200$ iterations.
For each matrix, we consider a linear system with a random unit norm right-hand side $\vb$ and we run GMRES (with an orthogonal Krylov basis generated by the non-truncated Arnoldi method) and sGMRES with truncated Arnoldi and the variants of sketch-and-select Arnoldi that performed best in the previous experiments, all with truncation parameter $k = 5$. The iterations are stopped once the residual is smaller than $10^{-8}$, or, in the case of sGMRES, when the condition number of the generated Krylov basis becomes larger than $10^{15}$. 
We visualize our results with a performance profile in which the horizontal axis displays the number $\theta$ of digits of accuracy lost in the residual with respect to the best method. For example, a value $\theta = 2$ means that the residual of a specific method is within a factor $100$ of the residual of the best method, which is always GMRES.

The results are shown in \cref{fig:test_sgmres}. We  observe that \texttt{ssa-pinv}, \texttt{ssa-OMP}, \texttt{ssa-SP} and \texttt{ssa-greedy} obtain on average two more digits of accuracy compared to truncated Arnoldi. All sketch-and-select variants perform similarly, with \texttt{ssa-pinv} being slightly better than the others.

\begin{figure}[h!]
    \centering
    \includegraphics[scale=0.4]{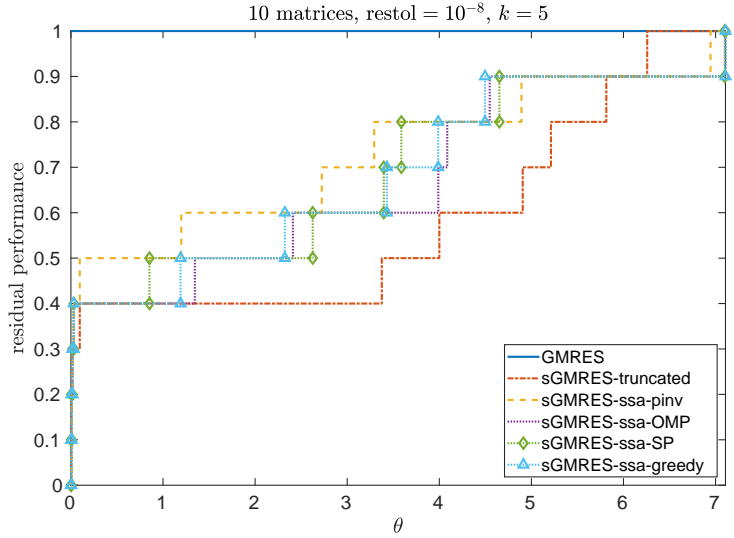}
    \caption{Performance profile of GMRES and sGMRES with different basis constructors, using a residual tolerance of $10^{-8}$, a truncation parameter of $k=5$ for sGMRES and a maximum basis condition number of $10^{15}$. \revtwo{The sketch-and-select methods obtain on average two more digits of accuracy compared to truncated Arnoldi}.}
    \label{fig:test_sgmres}
\end{figure}

\subsection{sGMRES with very ill-conditioned bases}\label{sec:ill}

While running the experiments, we noticed that for several test problems sGMRES continued to converge even after the condition number of the generated Krylov basis became larger than $10^{15}$. This phenomenon does not occur consistently and we were unable to find a clear theoretical explanation. Hence, we preferred not to rely on this behavior and rather stop when the basis becomes ill-conditioned.
The sGMRES method often continues to converge both when the basis is constructed with truncated Arnoldi, and with \texttt{ssa-OMP}, \texttt{ssa-SP} or \texttt{ssa-greedy}, but we did not observe it when using \texttt{ssa-pinv}. We were able to partially explain this behavior by looking at the singular value distribution of the Krylov basis constructed by the different methods. 

\begin{figure}[h!]
    \centering
    \includegraphics[scale=0.39]{test_singval_m400-mat46-t5.pdf}
    \caption{\revone{Convergence of GMRES and sGMRES (top left), basis condition number growth (top right) and singular values of $V_m$ for basis dimension \revtwo{$m = 400$} and \revtwo{$m = 200$} (bottom left and right, respectively), using different basis constructors, for the test problem \texttt{Norris/torso3}. All methods, except \texttt{sGMRES-ssa-pinv}, continue to converge even when $\cond(V_m)>10^{15}$.}}
    \label{fig:test_sgmres_singval}
\end{figure}

In \cref{fig:test_sgmres_singval} we show the convergence behavior of sGMRES for the test problem \texttt{Norris/torso3} (size $N = 259156$), as well as the growth of the basis condition number, and the distribution of singular values for the bases of size \revtwo{$m = 200$} and \revtwo{$m = 400$} generated with the different methods. Although the smallest singular values of the bases constructed using the sketch-and-select Arnoldi variants have roughly the same order of magnitude, we  see that the singular value distribution for the \texttt{ssa-pinv} basis is significantly different, with the number of small singular values being quite large relative to the other sketch-and-select methods. This is most evident for the basis of dimension~\revtwo{$m = 200$}. This difference implies that the basis constructed with \texttt{ssa-pinv} has a smaller numerical rank compared to the other bases, and it is likely that this is causing sGMRES to stagnate. On the other hand, the basis constructed by truncated Arnoldi has a condition number of order $10^{15}$ already for \revtwo{$m = 100$}, and for \revtwo{$m = 200$} its singular value distribution is quite similar to the one obtained with \texttt{ssa-pinv}. Nevertheless, sGMRES with truncated Arnoldi continues to converge and suffers only a small delay. 

\begin{figure}[h!]
    \centering
    \includegraphics[scale=0.39]{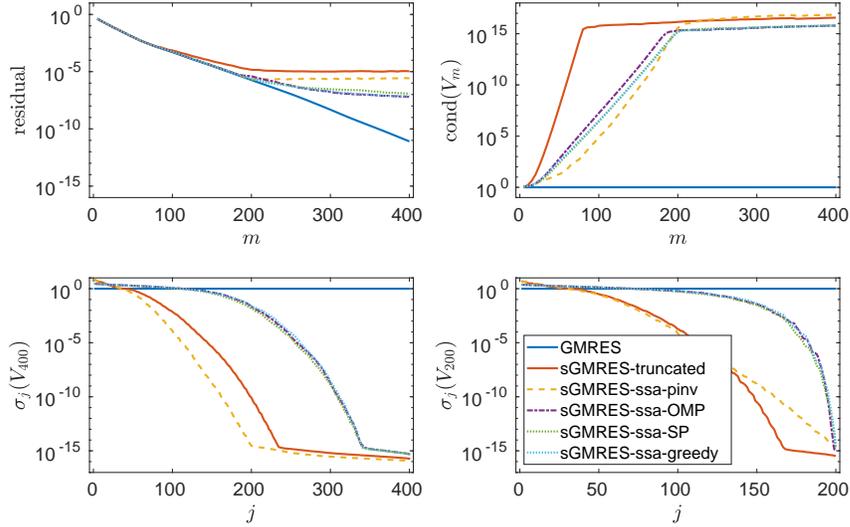}
    \caption{\revone{Convergence of GMRES and sGMRES (top left), basis condition number growth (top right) and singular values of $V_m$ for basis dimension \revtwo{$m = 400$} and \revtwo{$m = 200$} (bottom left and right, respectively), using different basis constructors, for the test problem \texttt{Norris/stomach}. All methods display delays in convergence after $\cond(V_m) > 10^{15}$.}}
    \label{fig:test_sgmres_singval2}
\end{figure}

In \cref{fig:test_sgmres_singval2}, we show the results of the same experiment for the test problem \texttt{Norris/stomach} (size $N = 213360$), where the behavior of sGMRES is significantly different. Even though the condition number growth and singular value distributions of the generated Krylov bases are very similar to the ones in \cref{fig:test_sgmres_singval}, for all basis generation processes sGMRES starts to stagnate or exhibits a significant slowdown in convergence shortly after the condition number of the Krylov basis becomes larger than $10^{15}$.

\revtwo{
\begin{figure}[h!]
    \centering
    \includegraphics[scale=0.39]{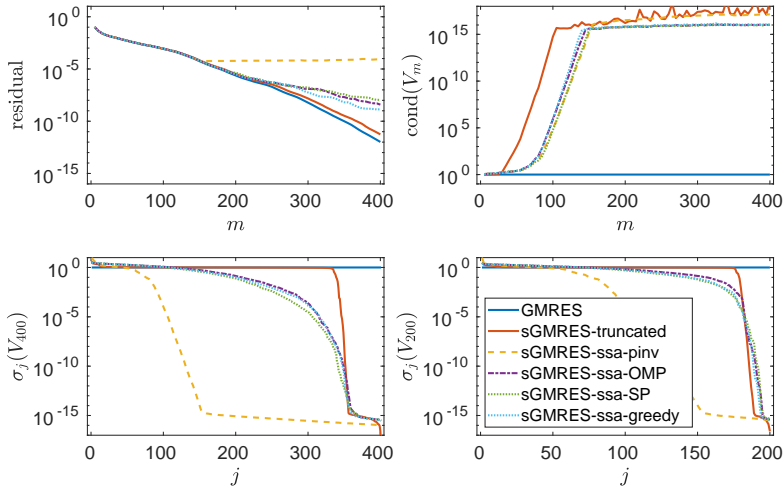}
    \caption{\revtwo{Convergence of GMRES and sGMRES (top left), basis condition number growth (top right) and singular values of $V_m$ for basis dimension \revtwo{$m = 400$} and \revtwo{$m = 200$} (bottom left and right, respectively), using different basis constructors, for the test problem \texttt{FEMLAB/poisson3Db}. The basis constructed by truncated Arnoldi is the most ill-conditioned, but it has fewer small singular values.}}
    \label{fig:test_sgmres_singval3}
\end{figure}
}

\revtwo{
In \cref{fig:test_sgmres_singval3}, we show the results of the same experiment for the test problem \texttt{FEMLAB/poisson3Db} (size $N = 85623$), where sGMRES with truncated Arnoldi performs better than with sketch-and-select. In this case, even if the condition number of the basis generated with truncated Arnoldi grows more quickly than with the sketch-and-select methods, by looking at the distribution of singular values we see that there is only a small number of very small singular values, while most of the singular values are close to one. On the other hand, \texttt{ssa-pinv} constructs a basis with slightly slower condition number growth, but it has a significantly higher number of small singular values, especially for large $m$. This example highlights the fact that the singular value distribution of the Krylov basis can play an important role in the convergence behavior of sGMRES.
}

The examples in \cref{fig:test_sgmres_singval,fig:test_sgmres_singval2,fig:test_sgmres_singval3} demonstrate that the convergence behavior of sGMRES with an ill-conditioned basis remains an open problem. Further analysis of these situations will be needed to fully understand the behavior of sGMRES.
}

\revtwo{
\begin{figure}[h!]
    \centering
    \includegraphics[scale=0.4]{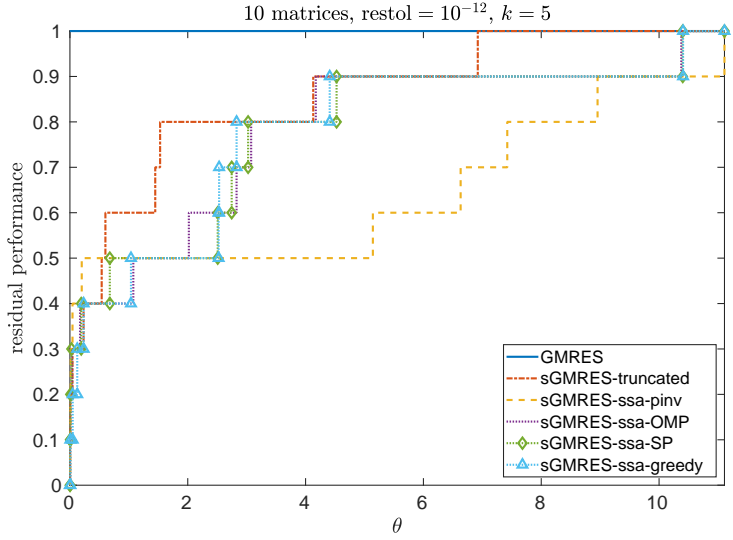}
    \caption{\revtwo{Performance profile of GMRES and sGMRES with different basis constructors, using a residual tolerance of $10^{-12}$ and a truncation parameter of $k=5$ for sGMRES. On each problem, all methods are run for the same number of iterations, regardless of basis condition number growth.}}
    \label{fig:test_sgmres_fixiter}
\end{figure}
}

\revtwo{In light of these observations, we have also tested the performance of sGMRES by allowing the method to proceed even after the basis becomes ill-conditioned. For each of the $10$ test problems, we run GMRES until its residual is smaller than $10^{-12}$, and we run all variants of sGMRES for the same number of iterations as GMRES, without monitoring the basis condition number. The results are displayed in \cref{fig:test_sgmres_fixiter} with a performance profile, where again~$\theta$ denotes the number of digits of accuracy lost with respect to the best method. 
In this case sGMRES with truncated Arnoldi performs better than with sketch-and-select Arnoldi, and on several problems it is able to gain one or two digits of accuracy with respect to the best sketch-and-select methods. Note that sGMRES with \texttt{ssa-pinv} performs particularly poorly, because it consistently stops converging when the basis becomes ill-conditioned, in contrast with the other methods that often keep converging. It appears that sGMRES with truncated Arnoldi benefits from this phenomenon more often than the sketch-and-select methods.

}

\section{Growth of the basis condition number}\label{sec:cond}

\revtwo{
In this section, we establish bounds on how the condition number of a basis $V_j$ can change upon the addition of the next basis vector $\vv_{j+1}$. This analysis applies to any basis construction procedure and it is not restricted to sketch-and-select Arnoldi. In addition to proving upper bounds on the condition number growth, we also identify some fundamental limitations of non-orthogonal basis construction approaches, including truncated Arnoldi and the sketch-and-select Arnoldi procedure that we have introduced.
}

By \cref{eq:basiscond}, \revtwo{to bound the growth of $\cond([V_j, \vv_{j+1}])$} it is sufficient to  only control the condition number growth of the sketched Krylov basis $[SV_j, S \vv_{j+1}]$. For notational convenience, we will write this as $[V,\vv]$. Our aim is to bound the growth of $\cond([V , \vv])$ in terms of the current $\cond(V)$. Note that the Gram matrix $[V, \vv]^T [V, \vv]$ has unit diagonal in the sketch-and-select Arnoldi process as we always normalize each sketched Krylov basis vector, and so 
\[
 \cond(V)^2 = \cond\left( \begin{bmatrix} V^T V & 0 \\ 0^T & 1 \end{bmatrix} \right).
\]
Also, 
\[
 \cond([V,\vv])^2 = \cond\left( \begin{bmatrix} V^T V & V^T \vv  \\ \vv^T V & 1 \end{bmatrix} \right).
\]
We can now apply standard relative perturbation bounds known for symmetric positive definite matrices. To this end, write 
\[
\begin{bmatrix} V^T V & V^T \vv  \\ \vv^T V & 1 \end{bmatrix} 
= 
\begin{bmatrix} V^T V & 0  \\ 0^T & 1 \end{bmatrix} 
+
\begin{bmatrix} O & V^T \vv  \\ \vv^T V & 0 \end{bmatrix} 
=: G + \Delta G.
\]
Note that $\|\Delta G\|=\|V^T \vv\|$. Then (see, e.g., \cite{demmel1992jacobi} or \cite[Thm.~2.3]{Mathias})
\begin{eqnarray*}
\cond([V,\vv])^2 &=& \cond(G + \Delta G) = \frac{\lambda_{\max}(G + \Delta G)}{\lambda_{\min}(G + \Delta G)} \\
&\leq& 
\frac{(1+\eta) \lambda_{\max}(G)}{(1-\eta)\lambda_{\min}(G)} 
= 
\frac{1+\eta}{1-\eta} \cond(V)^2, 
\end{eqnarray*}
where 
\[
\eta = \|G^{-1/2} (\Delta G) G^{-1/2} \| = \| (V^T V)^{-1/2} V^T \vv \| \leq \sigma_{\min}(V)^{-1} \| V^T \vv \|.
\]
\smallskip
Clearly this bound is only useful as long as $1-\eta>0$, which is guaranteed if $\|V^T \vv\| < \sigma_{\min}(V)$, and generally it cannot be expected to be sharp. However, it shows that it is a good idea to try to keep $\|V^T \vv\|$ as small as possible. 
Going back to the notation used for the sketch-and-select Arnoldi process, this means that we should aim to keep $\| (SV_j)^T (S \vv_{j+1}) \|$ small.

\smallskip

\revtwo{In the following, we use} an alternative approach to quantify the condition number growth, by bounding the smallest and largest eigenvalue of the Gram matrix, taking into account the special structure of that matrix. \revtwo{In addition to bounding the condition number growth, this more constructive approach allows us to identify a vector~$\vv$ that approximately attains the upper bound on the condition number.} The following theorem provides such a result, giving a more explicit bound on \revone{$\sigma_{\min}([V, \vv])$ and, as a consequence, on $\cond([V, \vv])$}.

\begin{theorem}
\label{thm:basis-condition-number-bound}
Let $V$ be a matrix with $m$ linearly independent columns of unit norm. Denote by $\sigma_{\min}$ the smallest singular value of $V$ and let $\vv$ be a unit norm vector such that $\|V^T \vv\| < \sigma_{\min}$. Then
\revone{
\begin{equation}
    \label{eq:sigmamin-thm-bound}
    \sigma_{\min}([V, \vv])^2 \ge \frac{1 + \sigma_{\min}^2 -  \sqrt{\strut (1-\sigma_{\min}^2)^2 + 4\|V^T \vv \|^2}}{2},
\end{equation}
and there exists a vector $\vv$ that attains the lower bound \cref{eq:sigmamin-thm-bound}. For the matrix $[V, \vv]$ constructed with the vector $\vv$ that attains \cref{eq:sigmamin-thm-bound}, we have 
\begin{equation}
    \label{eq:condition-number-thm-bound--attainable}
    \cond([V, \vv])^2 \ge \frac{2}{1 + \sigma_{\min}^2 -  \sqrt{\strut (1-\sigma_{\min}^2)^2 + 4\|V^T \vv \|^2}}.
\end{equation}
}
\end{theorem}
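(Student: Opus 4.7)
The plan is to reduce the statement to a lower bound on the smallest eigenvalue of the Gram matrix $G := [V,\vv]^T[V,\vv]$, since $\sigma_{\min}([V,\vv])^2 = \lambda_{\min}(G)$. Writing $\vb := V^T\vv$ and $\alpha := \|\vb\|$, the Gram matrix has the block form
\[
G = \begin{bmatrix} V^TV & \vb \\ \vb^T & 1 \end{bmatrix}.
\]
Cauchy interlacing applied to principal submatrices yields $\lambda_{\min}(G) \le \sigma_{\min}^2$, and for any $\lambda$ that is not an eigenvalue of $V^TV$ a Schur-complement calculation gives
\[
\det(G - \lambda I) = \det(V^TV - \lambda I)\bigl[(1-\lambda) - \vb^T(V^TV - \lambda I)^{-1}\vb\bigr].
\]
Hence every eigenvalue of $G$ strictly smaller than $\sigma_{\min}^2$ satisfies the secular equation $1 - \lambda = \vb^T(V^TV - \lambda I)^{-1}\vb$.

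In this regime $V^TV - \lambda I \succ 0$, and diagonalising $\vb$ in the eigenbasis of $V^TV$ gives the Rayleigh-quotient bound $\vb^T(V^TV - \lambda I)^{-1}\vb \le \alpha^2/(\sigma_{\min}^2 - \lambda)$. Combined with the secular equation, this yields the quadratic inequality $(1-\lambda)(\sigma_{\min}^2 - \lambda) \le \alpha^2$, and solving it produces exactly the lower bound \cref{eq:sigmamin-thm-bound}. The edge case in which $\lambda_{\min}(G)$ coincides with an eigenvalue of $V^TV$ is dispatched by noting that the right-hand side of \cref{eq:sigmamin-thm-bound} is itself at most $\sigma_{\min}^2$, which follows by squaring $1 - \sigma_{\min}^2 \le \sqrt{(1-\sigma_{\min}^2)^2 + 4\alpha^2}$ and using $\sigma_{\min} \le 1$ (a consequence of the columns of $V$ having unit norm).

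For attainment, let $\vw_{\min}$ be a unit eigenvector of $V^TV$ with eigenvalue $\sigma_{\min}^2$, and set $\vv_0 := V\vw_{\min}/\sigma_{\min}$, which is unit norm and satisfies $V^T\vv_0 = \sigma_{\min}\vw_{\min}$. Picking any unit $\vv^\perp$ orthogonal to the column space of~$V$, the vector
\[
\vv := (\alpha/\sigma_{\min})\,\vv_0 + \sqrt{1-\alpha^2/\sigma_{\min}^2}\;\vv^\perp
\]
is unit norm with $V^T\vv = \alpha\vw_{\min}$, and for this choice the Rayleigh-quotient bound above becomes an equality, so \cref{eq:sigmamin-thm-bound} is attained. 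Given this $\vv$, the condition number bound \cref{eq:condition-number-thm-bound--attainable} follows at once from $\cond([V,\vv])^2 = \lambda_{\max}(G)/\lambda_{\min}(G)$ together with the trivial estimate $\lambda_{\max}(G) \ge 1$ (take the $(m+1)$-st standard basis vector as a Rayleigh-quotient witness, since the bottom-right entry of $G$ equals one).

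The main obstacle I anticipate is bookkeeping rather than any single hard calculation: one must treat carefully the secular-equation edge cases where $\lambda_{\min}(G)$ equals an eigenvalue of $V^TV$, and one must verify that the attaining vector can actually be constructed. The strict hypothesis $\alpha < \sigma_{\min}$ is precisely what keeps $\sqrt{1-\alpha^2/\sigma_{\min}^2}$ real, and the sketched setting with embedding dimension $s>m$ supplies the required $\vv^\perp$. An alternative route would be to minimise $\vx^TG\vx$ over unit $\vx$ directly using Lagrange multipliers, but the secular-equation approach has the advantage of making both the inequality and its attainment visible from a single computation.
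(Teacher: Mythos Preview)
Your argument is correct and takes a genuinely different route from the paper. The paper parametrises a general unit vector in $\R^{m+1}$ as $[\vx^T,\sqrt{1-\|\vx\|^2}]^T$, expands the Rayleigh quotient of the Gram matrix, minimises it first over the direction of $\vx$ (forcing it to be the eigenvector of $V^TV$ for $\sigma_{\min}^2$) and over $\vv$ (forcing $V^T\vv$ to be proportional to $\vx$), and then performs a calculus optimisation over $\beta=\|\vx\|$; a fair amount of algebra is needed to simplify the resulting expression into the closed form \cref{eq:sigmamin-thm-bound}. You instead invoke Cauchy interlacing to place $\lambda_{\min}(G)\le\sigma_{\min}^2$, use the Schur-complement secular equation to characterise any eigenvalue of $G$ strictly below $\sigma_{\min}^2$, and bound the resolvent term by $\alpha^2/(\sigma_{\min}^2-\lambda)$, which immediately gives the quadratic $(1-\lambda)(\sigma_{\min}^2-\lambda)\le\alpha^2$ whose smaller root is exactly the right-hand side of \cref{eq:sigmamin-thm-bound}. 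This is shorter and avoids the optimisation step entirely; it also makes the edge case $\lambda_{\min}(G)=\sigma_{\min}^2$ and the attainment construction (taking $\vb$ along the bottom eigenvector so the resolvent bound is an equality) completely transparent. The paper's approach, by contrast, yields the worst-case $\vv$ and the associated Rayleigh-quotient minimiser simultaneously, which feeds more directly into the companion upper bound proved in \cref{cor:basis-condition-number-bound}. One minor remark: your attainment construction needs a unit $\vv^\perp$ orthogonal to the range of $V$, hence needs the ambient dimension to exceed~$m$; you note this holds in the sketched setting, but it is in fact already forced by the hypothesis $\|V^T\vv\|<\sigma_{\min}$ for a unit $\vv$, since a square invertible $V$ would give $\|V^T\vv\|\ge\sigma_{\min}$.
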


\begin{proof}
We begin by remarking that, as the columns of $[V,\vv]$ are normalized, we have  $\sigma_{\max}([V,\vv])\leq \sqrt{m+1}$ and hence  any  ill-conditioning of $[V,\vv]$ is mainly attributable to a small value of $$\sigma_{\min}([V,\vv]) = \lambda_{\min}([V,\vv]^T [V,\vv])^{1/2}.$$ 

Define the Rayleigh quotient
\[
R(\vx,\vv) = \left[ \vx^T, \sqrt{1-\|\vx\|^2}\right] 
\begin{bmatrix} V^T V & V^T \vv  \\ \vv^T V & 1 \end{bmatrix} 
\begin{bmatrix} \vx \\  \sqrt{1-\|\vx\|^2} \end{bmatrix}, \quad  \|\vx\|\leq 1.
\]
Let us denote $\beta = \|\vx\|$ and fix $\| V^T \vv\|:=\alpha$. Then
\begin{equation}
\label{eq:ray}
R(\vx,\vv) = \vx^T (V^T V)\vx + 2 \vx^T (V^T \vv) \sqrt{1-\beta^2} + 1 - \beta^2.
\end{equation}
The first term in this Rayleigh quotient is minimized  by choosing $\vx$ as an eigenvector of $V^T V$ corresponding to $\lambda_{\min} = \lambda_{\min}(V^T V)$. For any choice of $\vx$, $\|\vx\|=\beta$, the second term is minimal when $\vv$ is such that $V^T \vv = -\alpha \vx/\beta$. Hence, we can minimize the overall Rayleigh quotient directly, leading to 
\[
    R_{\min}(\beta) = \beta^2 \lambda_{\min} - 2 \alpha \beta \sqrt{1-\beta^2} + 1 - \beta^2.
\]
To find the optimal $\beta\in [0,1]$, we set $\gamma:= 1-\beta^2$, upon which $R_{\min}(\beta) = (1-\gamma)\lambda_{\min} - 2\alpha \sqrt{\gamma-\gamma^2} +\gamma$ which is easy to differentiate for $\gamma$. 
The optimal value $\beta_*$ that minimizes $R_{\min}$ is
\[
    \beta_* = \sqrt{1-\gamma_*}, \quad \text{where} \quad  \gamma_* = \frac{C_*^2 - C_*\sqrt{C_*^2+4}  + 4}{2(C_*^2+4)}, \quad C_* = \frac{1-\lambda_{\min}}{\alpha}.
\]

We can now derive a rather simple expression for $R_\text{min}(\beta_*)$ in terms of $C_*$. We have
\[
    \gamma_* = \frac{C_*^2 - C_*\sqrt{C_*^2+4}  + 4}{2(C_*^2+4)} = \frac{1}{2} - \frac{1}{2} \frac{C_*}{\sqrt{C_*^2 + 4}},
\]
and from this expression it easily follows that
\[
\sqrt{\gamma_* - \gamma_*^2} = \frac{1}{\sqrt{C_*^2 + 4}}.
\]
By plugging this expression and $\gamma_*$ into the expression of $R_\text{min}(\beta_*)$, we have
\begin{align*}
    R_\text{min}(\beta_*) &= \lambda_\text{min} + (1 - \lambda_\text{min})\gamma_* - 2 \alpha \sqrt{\gamma_* - \gamma_*^2} \\ 
    &= \lambda_\text{min} + \alpha C_* \Big( \frac{1}{2} - \frac{1}{2} \frac{C_*}{\sqrt{C_*^2 + 4}} \Big) - \frac{2 \alpha}{\sqrt{C_*^2 + 4}} \\
    &= \lambda_{\text{min}} + \frac{\alpha}{2 \sqrt{C_*^2 + 4}} \Big( C_* \sqrt{C_*^2 + 4} - C_*^2 - 4 \Big) \\
    &= \lambda_{\text{min}} + \frac{\alpha}{2} \Big( C_* - \sqrt{C_*^2 + 4} \Big) \\
    &= \frac{1}{2} + \frac{1}{2}\lambda_\text{min} - \frac{\alpha}{2} \sqrt{C_*^2 + 4} \\[2mm]
    &= \frac{1 + \lambda_\text{min} -  \sqrt{(1-\lambda_\text{min})^2 + 4\alpha^2}}{2}.
\end{align*}
For this quantity to be positive, we require $\alpha^2 < \lambda_{\min}$ or equivalently $\|V^T \vv\| < \sigma_{\min}(V)$.

\revone{This shows that $\sigma_\text{min}([V, \vv])^2 \ge R_\text{min}(\beta_*)$ and this lower bound is attained by $\vv$ such that $V^T \vv = -\alpha \vx / \beta_*$, where $\vx$ is the eigenvector of $V^T V$ associated to $\lambda_\text{min}(V^T V)$.
This proves the first part of the theorem, recalling that $\lambda_{\min} = \sigma_{\min}(V)^2$ and $\alpha = \|V^T \vv\|$.}
\revone{The second part follows from \cref{eq:sigmamin-thm-bound}, by using $\cond([V, \vv]) = \sigma_{\max}([V, \vv])/\sigma_{\min}([V, \vv])$ and the fact that $\sigma_{\max}([V, \vv]) \ge 1$.}
\end{proof}

\revone{
The following corollary is a simple consequence of \cref{thm:basis-condition-number-bound}, \revtwo{and provides an upper bound on the basis condition number growth}.
\begin{corollary}
\label{cor:basis-condition-number-bound}
Let $V$ be a matrix with $m$ linearly independent columns of unit norm. Denote by $\sigma_{\min}$ and $\sigma_{\max}$ the smallest and largest singular value of $V$, respectively. Further, let $\vv$ be a unit norm vector such that $\|V^T \vv\| < \sigma_{\min}$. Then
\begin{equation}
    \label{eq:condition-number-thm-bound}
    \cond([V,\vv])^2 \leq \frac{1 + {\sigma_{\max}^2} +  \sqrt{\strut (\sigma_{\max}^2-1)^2 + 4\|V^T \vv\|^2}}{1 + \sigma_{\min}^2 -  \sqrt{\strut (\sigma_{\min}^2-1)^2 + 4\|V^T \vv\|^2}}.
\end{equation}
\end{corollary}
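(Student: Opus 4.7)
The plan is to combine two one-sided bounds on the extreme singular values of $[V, \vv]$: the lower bound on $\sigma_{\min}([V, \vv])$ provided directly by \cref{thm:basis-condition-number-bound}, and a matching upper bound on $\sigma_{\max}([V, \vv])$ obtained by replaying the Rayleigh quotient argument from the proof of that theorem, but optimizing in the opposite direction.

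For the denominator, \cref{thm:basis-condition-number-bound} gives immediately
\begin{equation*}
\sigma_{\min}([V, \vv])^2 \;\ge\; \frac{1 + \sigma_{\min}^2 - \sqrt{(\sigma_{\min}^2 - 1)^2 + 4\|V^T \vv\|^2}}{2},
\end{equation*}
which, up to a factor of $1/2$, is exactly the denominator in \cref{eq:condition-number-thm-bound} (noting that $(1-\sigma_{\min}^2)^2 = (\sigma_{\min}^2-1)^2$).

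For the numerator, I would mirror the Rayleigh quotient analysis. Writing every unit vector in $\mathbb{R}^{m+1}$ as $(\vx^T, \sqrt{1-\beta^2})^T$ with $\beta = \|\vx\| \in [0,1]$ and setting $\alpha := \|V^T\vv\|$, the same identity
\begin{equation*}
R(\vx, \vv) = \vx^T (V^T V) \vx + 2 \vx^T (V^T \vv) \sqrt{1 - \beta^2} + 1 - \beta^2
\end{equation*}
used in the proof of \cref{thm:basis-condition-number-bound} now has to be upper bounded: the Rayleigh--Ritz inequality for $V^T V$ gives $\vx^T(V^TV)\vx \le \beta^2 \sigma_{\max}^2$, and Cauchy--Schwarz gives $2\vx^T(V^T\vv)\sqrt{1-\beta^2} \le 2\alpha\beta\sqrt{1-\beta^2}$. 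Substituting $\gamma = 1-\beta^2$ reduces the maximization of the resulting expression over $\gamma \in [0,1]$ to exactly the same univariate calculus problem solved in the proof of \cref{thm:basis-condition-number-bound}, but with $\lambda_{\min}$ replaced by $\sigma_{\max}^2$ and the sign of the square-root term flipped. Repeating the same algebra (with $C_* = (\sigma_{\max}^2 - 1)/\alpha$ in place of $(1-\lambda_{\min})/\alpha$) yields
\begin{equation*}
\sigma_{\max}([V, \vv])^2 \;\le\; \frac{1 + \sigma_{\max}^2 + \sqrt{(\sigma_{\max}^2 - 1)^2 + 4 \|V^T\vv\|^2}}{2}.
\end{equation*}

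Dividing this upper bound by the lower bound for $\sigma_{\min}([V,\vv])^2$ produces $\cond([V,\vv])^2$; the factors of $1/2$ in numerator and denominator cancel, and \cref{eq:condition-number-thm-bound} follows. The main obstacle is mostly bookkeeping: one has to verify that the critical point of the $\gamma$-maximization lies in the admissible range $[0,1/2]$ (so that $\sqrt{\gamma-\gamma^2}$ remains real and $f'$ actually vanishes there) and that the sign flip in the second term propagates consistently through the simplification, producing the plus sign in front of the square root in the closed-form expression. Substantively, no new ideas beyond those already used in the proof of \cref{thm:basis-condition-number-bound} are required.
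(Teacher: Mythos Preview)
Your proposal is correct and follows essentially the same approach as the paper: use the lower bound on $\sigma_{\min}([V,\vv])^2$ from \cref{thm:basis-condition-number-bound}, then replay the Rayleigh quotient argument with $\lambda_{\max}$ in place of $\lambda_{\min}$ (and the sign of the cross term flipped) to obtain the matching upper bound on $\sigma_{\max}([V,\vv])^2$, and divide. The paper phrases the maximization as ``choose $\vx$ as the eigenvector for $\lambda_{\max}$ and $\vv$ so that $V^T\vv=\alpha\vx/\beta$,'' which is logically equivalent to your explicit use of the Rayleigh--Ritz and Cauchy--Schwarz inequalities; the resulting one-variable optimization in $\gamma$ and its closed-form solution are identical, with the paper writing $C=(\lambda_{\max}-1)/\alpha$ for what you call $C_*=(\sigma_{\max}^2-1)/\alpha$.
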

}

\begin{proof}
\revone{We already have the bound \cref{eq:sigmamin-thm-bound} from \cref{thm:basis-condition-number-bound} for $\sigma_{\min}([V, \vv])$.}
\revone{With the same approach used in the proof of \cref{thm:basis-condition-number-bound}}, the first term in the Rayleigh quotient \cref{eq:ray} can be \emph{maximized} by choosing~$\vx$ as an eigenvector of $V^T V$ corresponding to $\lambda_{\max} = \lambda_{\max}(V^T V)$. For any choice of~$\vx$, $\|\vx\|=\beta$, the second term in \cref{eq:ray} is  maximal when $\vv$ is such that $V^T \vv = \alpha \vx/\beta$. Hence, we can maximize the overall Rayleigh quotient directly, leading to 
\[
    R_{\max}(\beta) = \beta^2 \lambda_{\max} + 2 \alpha \beta \sqrt{1-\beta^2} + 1 - \beta^2.
\]
To find the optimal $\beta\in [0,1]$, we set $\gamma:= 1-\beta^2$, upon which $R_{\max}(\beta) = (1-\gamma)\lambda_{\max} + 2\alpha \sqrt{\gamma-\gamma^2} +\gamma$. 
 The optimal value $\beta^*$ that maximizes $R_{\max}$ is
\[
    \beta^* = \sqrt{1-\gamma^*}, \quad \text{where} \quad  \gamma^* = \frac{C^2 - C \sqrt{C^2+4}  + 4}{2(C^2+4)}, \quad C = \frac{\lambda_{\max}-1}{\alpha}.
\]
(Note that the only difference with respect to the expression for $\beta_*$ is in $C$ versus $C_*$.)
Evaluating $R_{\max}(\beta^*)$  yields 
\[
 R_\text{max}(\beta^*)  = 
 \frac{1 + \lambda_\text{max} +  \sqrt{(\lambda_\text{max}-1)^2 + 4\alpha^2}}{2}.
\]
Combining the expressions for the (worst-case) Rayleigh quotients, we obtain 
\[
\cond([V,\vv])^2 \leq \frac{R_{\max}(\beta^*)}{R_{\min}(\beta_*)} 
= \frac{1 + \lambda_\text{max} +  \sqrt{(\lambda_\text{max}-1)^2 + 4\alpha^2}}{1 + \lambda_\text{min} -  \sqrt{(\lambda_\text{min}-1)^2 + 4\alpha^2}}.
\]
The result follows since $\lambda_{\max} = \sigma_{\max}(V)^2$, $\lambda_{\min} = \sigma_{\min}(V)^2$, and $\alpha = \|V^T \vv\|$.
\end{proof}

\revone{Note that the \revtwo{upper} bound \cref{eq:condition-number-thm-bound} on $\cond([V, \vv])$ is not necessarily attainable, since we have used two different vectors $\vv$ to respectively maximize $R_{\max}(\beta^*)$ and minimize $R_{\min}(\beta_*)$.}
\revone{On the other hand, \cref{thm:basis-condition-number-bound} states that there exists a choice of $\vv$ for which $\cond([V, \vv])$ satisfies the \revtwo{lower} bound \cref{eq:condition-number-thm-bound--attainable}.}

Recalling that $\sigma_\text{max}([V, \vv]) \le \sqrt{m+1}$, we find that the right-hand side of \cref{eq:condition-number-thm-bound--attainable} is smaller than the right-hand side of \cref{eq:condition-number-thm-bound} at most by a factor $m+1$.
In principle, it is possible to select a vector $\vv$ that realizes \cref{eq:condition-number-thm-bound--attainable} at every iteration, even though this is unlikely to happen in practice. 

\revtwo{In the discussion below we show the following: if at every iteration of a non-orthogonal basis construction procedure we add to the basis a vector that attains the lower bound in~\cref{eq:sigmamin-thm-bound}, the condition number of the basis can grow exponentially even if the assumption $\norm{V^T \vv} < \sigma_\text{min}(V)$ is always satisfied.}

Going back to the notation used for the sketch-and-select Arnoldi process, let us consider the behavior of $\sigma_\text{min}(SV_m)$ as $m$ increases, assuming that at each iteration \revtwo{we select}~$S \vv_{m+1}$ as a vector $\vv$ that satisfies \cref{eq:sigmamin-thm-bound} \revtwo{with an equality}. 
For convenience, define $x_m := \sigma_\text{min}(S V_m)$ and $\alpha_m := \|S \vv_{m+1}\| < x_m$.
Because of \cref{eq:sigmamin-thm-bound}, these quantities satisfy the recurrence relation
\begin{equation*}
    x_{m+1}^2 = \frac{1}{2} \left(1 + x_m^2 - \sqrt{(1 - x_m^2)^2 + 4 \alpha_m^2}\right), \qquad m \ge 1,
\end{equation*}
with $x_1 = \sigma_\text{min}(S\vv_1) = 1$.
Using the fact that $\sqrt{1 + z} \ge 1 + \frac{1}{2}z - \frac{1}{8}z^2$ for all $z \ge 0$, we can show that
\begin{equation*}
    x_{m+1}^2 \le x_m^2 - \frac{\alpha_m^2}{1 - x_m^2} \left(1 - \frac{\alpha_m^2}{(1 - x_m^2)^2}\right), \qquad \revtwo{m \ge 1}.
\end{equation*}
If for instance we \revtwo{assume that} $x_{\revtwo{m_0}} \le 1/\sqrt{2}$ and $\revone{\alpha_m = \frac{1}{2} x_m}$ \revtwo{for all $m \ge m_0$}, we have
\begin{equation*}
    1 - \frac{\alpha_m^2}{(1 - x_m^2)^2} \revone{\ge} \frac{1}{2}, \qquad \revtwo{m \ge m_0,}
\end{equation*}
and so we obtain
\begin{equation*}
    x_{m+1}^2 \le x_m^2 - \frac{1}{2} \alpha_m^2 \le \frac{7}{8} x_m^2, \qquad m \ge \revtwo{m_0}.
\end{equation*}
This implies that
\begin{equation}
\label{eqn:exponential-growth}
    \sigma_{\text{min}}(S V_m) \le \left(\frac{7}{8}\right)^{\revtwo{(m-m_0)}/2} \revtwo{\sigma_\text{min}(S V_{m_0})}, \qquad m \ge \revtwo{m_0},
\end{equation}
showing that \revone{there exists a sequence of vectors $\{\vv_{j+1}\}_{j = 1, \dots, m-1}$, satisfying the condition $\norm{(SV_j)^T S\vv_{j+1}} = \frac{1}{2} \sigma_{\text{min}}(SV_j)$ for all $j$, such that the smallest singular value of $SV_m$ converges geometrically to $0$}. As a consequence, \revtwo{with this choice} the condition number of $SV_m$ \revtwo{diverges} geometrically (and hence also $\cond(V_m)$, because of \cref{eq:basiscond}), even if we \revtwo{have imposed} the condition $\norm{(SV_m)^T S\vv_{m+1}} = \frac{1}{2} \sigma_{\text{min}}(SV_m)$ \revone{at each iteration}, which is quite stringent \revtwo{especially when $\sigma_\text{min}(SV_m)$ becomes very small}.

\revone{The above results on the geometric increase of the basis condition number put a theoretical limitation on any non-orthogonal Krylov basis generation approach, including the sketch-and-select Arnoldi methods we have introduced. This may be considered a gloomy outlook for sketching-based Krylov methods. However, one should keep in mind that the situation considered here is a \emph{worst-case scenario} in which a worst possible vector $\vv$ is chosen at each iteration. In practice, we find that the onset of exponential condition number increase can often be delayed with specific choices of the vectors~$\vv$, such as the choices made by our sketch-and-select variants.}

\section{Further remarks on the subset selection problem}

Selecting $k$ columns that give the smallest condition number of $[V,\vv]$ in the sketch-and-select Arnoldi process is a combinatorial problem, and even selecting a near-best index set $I$ is nontrivial.  We now give examples demonstrating that neither the largest coefficients in $V^{\dagger} \vw$ nor those in $V^T \vw$ do necessarily indicate the  best vectors to select for a minimal condition number growth. Note that most greedy algorithms, including the ``Algorithm Greedy'' in \cite{natarajan1995sparse}, OMP~\cite{pati1993orthogonal}, and SP~\cite{dai2009subspace} use the entries of either  $V^{\dagger} \vw$ or $V^T \vw$   to select vectors, and so can be misled on examples like the ones below.  
Consider 
\[
V = \frac{1}{\sqrt{5}}\begin{bmatrix}
1 & 0 & 0 \\
      2 & 2 & 0 \\
      0 & 1 &  1 \\
      0 & 0 & 2 
\end{bmatrix}, \quad
\vec{w} = \begin{bmatrix}
   8\\
   8\\
   9\\
   7
\end{bmatrix}.
\]
Note that all columns of $V$ have unit norm. Say $k=1$, then which of the three columns of $V$ should we project out of $\vw$ to get the best possible  conditioned basis of four vectors?
Let us compute
\[
    V^\dagger \vec{w} = \begin{bmatrix}
    9.39\\
    1.68\\
    9.95
\end{bmatrix}, \quad 
V^T \vw = \begin{bmatrix}
10.7\\
   11.2\\
   10.3
\end{bmatrix}.
\]
According to these coefficients, we might be tempted to project $\vw$ either against $\vec{v}_3$ or $\vec{v}_2$, respectively. 
However, among the vectors
\[
 \widehat{\vec{v}}_i := (I - \vec{v}_i\vec{v}_i^\dagger)  \vw, \quad i=1,2,3,
\]
it is the choice $i=1$ that strictly minimizes both $\cond([V, \widehat{\vec{v}}_i])$ and $\cond ([V, \widehat{\vec{v}}_i/{\|\widehat{\vec{v}}_i\|}])$.

Instead of condition number growth, perhaps a better measure to look at is the growth of loss of orthogonality, e.g., by any of the metrics
\[
    \left\| I - [V,\vec{v}]^T[V,\vec{v}] \right\|,  \quad 
    \left\| I - \left[V,\frac{\vec{v}}{\|\vec{v}\|}\right]^T\left[V,\frac{\vec{v}}{\|\vec{v}\|}\right] \right\|, 
\]
\[
     \left\| [V,\vec{v}]^T[V,\vec{v}] \right\|,  \quad 
    \left\| \left[V,\frac{\vec{v}}{\|\vec{v}\|}\right]^T\left[V,\frac{\vec{v}}{\|\vec{v}\|}\right] \right\|
\]
in the Euclidean or Frobenius norm. Can we get some guarantees for that? It turns out that this  is also not the case. The above matrix $V$ and the vector $\vw=[9,9,10,10]^T$ give examples where, in all cases, the third component of $V^\dagger\vec{w}$ and $V^T \vec{w}$ is the largest in modulus, but the smallest growth in loss of orthogonality with $k=1$ projection steps is obtained by projecting $\vw$ against~$\vv_2$.

We conclude that there must be some condition on $V$ (and possibly $\vw$) to guarantee that the ``correct'' (optimal) support of $k$~coefficients is selected. This is a well-known fact in  compressive sensing \cite{foucart2013invitation}, where conditions like the restricted isometry property~(RIP, \cite{candes2005decoding}) are needed to guarantee exact or approximate recovery in sparse least squares approximation (see, e.g., \cite{candes2006stable} or \cite[Thm.~C]{tropp2004greed}). On the other hand, the sensing problem $A\vx = \vb$ studied in this field is usually underdetermined and one often has the freedom to choose the columns of $A$ (the dictionary), whereas in our case we would like to select $k$ columns from a basis which is otherwise unstructured. Moreover, in compressive sensing the vector $\vx$ that one wants to recover is usually sparse, or at least well approximated by a sparse vector, while in our case the coefficient vectors are generally dense. Nevertheless, we believe that results developed in compressive sensing may be used to gain more insight into the sketch-and-select Arnoldi process.

\section{Conclusions and future work}

We have introduced a sketch-and-select Arnoldi process and demonstrated its potential to generate Krylov bases that are significantly better conditioned than those computed with the truncated Arnoldi process, at a computational cost that grows only linearly with the dimension of the Krylov space. 
\revone{The bases generated with a sketch-and-select procedure can be used within sketch-based Krylov solvers for linear algebra problems, such as the sGMRES method for linear systems, and they allow us to run the method for more iterations compared to truncated Arnoldi before the basis condition number becomes too large.}

\revone{We have formulated the problem of generating a well-conditioned Krylov basis as a best subset selection problem for sparse approximation, similar to those encountered in statistical learning and compressive sensing.}
While in principle any of the many  methods that have been proposed for these problems can be employed, we have been surprised to find that the most basic variant of the sketch-and-select Arnoldi process shown in \cref{alg:ssa} is among the best. In this approach we simply retain the $k$ largest modulus coefficients of the least squares solution, setting the remaining coefficients to~zero.

Our implementation of the sketch-and-select Arnoldi process in \cref{alg:ssa} is not optimized for performance, with several straightforward improvements possible including  the use of QR updating strategies for the solution of the least squares problem or performing some of the operations in reduced precision. Also, for a practical implementation to be used in production, the sketch-and-select Arnoldi process could be modified to adapt the parameter~$k$ dynamically based on the measured growth of the condition number. Bounds like the ones derived in \cref{sec:cond} might be useful to control the condition number growth efficiently. Further possible extensions include a sketch-and-select block Arnoldi process or restarting strategies. 

\revtwo{Finally, we note that our initial motivation for this work was to devise a multi-purpose method that would allow for the iterative construction of Krylov bases~$V_m$ with slowly growing condition number. However, our experiments with sGMRES in \cref{sec:ill} indicated that, in some cases, the condition number (i.e., the extremal singular values of $V_m$) may not be the only quantity that affects the numerical behavior of sGMRES. We suspect that  the \emph{distribution} of all singular values plays a role as well, but the numerical results are currently somewhat inconclusive. A dedicated investigation and analysis of sGMRES is needed to explain the effects we observed.}

\section*{Acknowledgments}
I.~S. wishes to thank S.~G. for the kind hospitality received during a research visit to the University of Manchester, where this work was initiated. We thank J{\"o}rg Liesen for communications on the Faber--Manteuffel theorem. \revone{We are grateful to the two anonymous referees for their insightful comments and suggestions.}

\bibliographystyle{siamplain}
\bibliography{manuscript-biblio}

\end{document}